\newtheorem{thm}{Theorem}[section]
\newtheorem{prop}[thm]{Proposition}
\newtheorem{definition}[thm]{Definition}
\numberwithin{equation}{section}
\begin{document}
	\title[Joint projective spectrum of $D_{\infty h}$]{Joint projective spectrum of $D_{\infty h}$}

	\author{Chen Li}
	\address{Chen Li: School of Mathematical Sciences, Fudan University, Shanghai, 200433, China.}
	\email{22110180024@m.fudan.edu.cn}

	\author
	{Kai Wang}
	\address{Kai WANG: School of Mathematical Sciences, Fudan University, Shanghai, 200433, China.}
	\email{kwang@fudan.edu.cn}

	\begin{abstract}
		We compute the joint spectrum of $D_{\infty h}$ with respect to the left regular representation, and finds two generators of the De Rham cohomology group of joint resolvent set which is induced by different central linear functionals. Through action of $D_{\infty h}$ on 4-ary trees, we get a self-similar realization of the group $C^*$ algebra of $D_{\infty h}$.    
	\end{abstract}

	\keywords{joint projective spectrum; maurer-cartan form; cohomology group; self similarity}

	\maketitle

	\section{Introduction}
	
	In classical Banach algebra theory, the Gelfand theory gives a comprehensive description of spectrum of operators. However, in the case of several Banach algebra elements, the (joint) spectral theory is more complicated. It is noteworthy that not only the commutativity of the tuple will influence the study, but also there is a distinction between algebraic and spatial joint spectra. 
	
	If the tuple $A=(A_1,A_2,\cdots,A_n)$ is a commutative tuple, i.e. $A_iA_j=A_jA_i$,$1\leq i,j\leq n$, J. L. Taylor defines the Taylor spectrum of the tuple by Koszul complex (see\cite{hormander1973introduction},\cite{taylor1970joint}). We refer the readers to \cite{curto1988applications}\cite{dosi2009frechet} for its applications on operator theory and sheaf theory.
	
	The matter becomes difficult when the tuple is non-commuting. In \cite{grigorchuk2017joint}, R. Grigorchuk and R. Yang generalized the classical defintion of spectrum, and considered the invertibility of the linear combination 
	\[A(z)=z_1A_1+z_2A_2+\cdots+z_nA_n.\]
	In fact, there has been increasing concern of the invertibility of $A(z)$ in fields of algebraic geometry, group theory, mathematical physics, PDEs and operator theory. We refer the readers to \cite{andersson2004functional}\cite{atkinson1972multiparameter}\cite{sleeman1978multiparameter}\cite{YANG200768} for more information. It yields the following notion of projective joint spectrum.
	
	\begin{definition}
		For a tuple $A=(A_1,A_2,\cdots,A_n)$ of elements in a unital Banach algebra $\mathcal{B}$, its projective joint spectrum $P(A)$ consists of $z\in \mathbb{C}^n$ such that $A(z)=z_1A_1+z_2A_2+\cdots+z_nA_n$ is not invertible in $\mathcal{B}$.
	\end{definition}
	
	Different from other notions of joint spectrum, for example, Taylor spectrum, the projective joint spectrum is novel in the sense of “base free". Instead of considering the invertibility of \[(A_1-z_1 I,A_2-z_2 I,\cdots, A_n-z_n I),\] we can focus on the homogeneous multiparameter pencil $A(z)$, which simplifies the study in many cases. Moreover, by homogeneousness of $A(z)$, we can consider the projective joint spectrum  $p(A)$ in complex projective space $\mathbb{P}^{n-1}=\mathbb{C}^n / \sim$ defined by $p(A)=P(A) / \sim$. In \cite{yang2009projective}, it is proved that $p(A)$ is a non-trivial compact set in $\mathbb{P}^{n-1}$ by Hartogs extension theorem.
	
	If the Banach algebra $\mathcal{B}$ is finite dimensional, matrix algebra for instance, then the projective joint spectrum is the hypersurface $\{detA(z)=0\}$. If the tuple is commutative, the projective joint spectrum will become union of hypersurfaces, which is similar to the case of Taylor spectrum.
	
	The projective resolvent set $P^c(A)=\mathbb{C}^n \text{\textbackslash}  P(A)$ and the spectrum itself has many similar properties as the single operater case. For example, it is proved that every path-connected component of $P^c(A)$ is a domain of holomorphy for general Banach algebra in \cite{he2015projective}. We also refer to the readers, for instance, \cite{cade2013projective}\cite{douglas2018hermitian}\cite{he2015projective}\cite{liang2018quasinilpotent}\cite{yang2009projective} on its applications on Hermitian metrics, hyperinvariant subspace problem and cyclic cohomology.

	Now consider a finitely generated group $G$\footnote{In this paper, all groups are discrete locally compact group unless otherwise stated.} with the generator set $S=\{g_1,g_2,\cdots,g_n\}$, and let $\rho$ be a unitary representation of $G$ on a Hilbert space $H$, which will be denoted by $(\rho, H)$. Let $C^*_{\rho}(G)$ denote the $C^*$-algebra generated by $A_i=\rho(g_i)$, $i=1,\dots,n$. The projective joint spectrum of $G$ related to $\rho$, denoted by $P(A_{\rho})$, is the projective joint spectrum of the tuple $A_\rho=(A_1,A_2,\cdots,A_n)$.
	
	 Given two representations $(\rho_1, H_1)$ and $(\rho_2, H_2)$ of $G$, they are said to be (unitarily) equivalent if there exists a unitary map $U:H_1\rightarrow H_2$ such that
	\[\rho_2(g)=U\rho_1(g)U^{-1}\qquad \forall g\in G.\]
	
	Apparently, the projective joint spectrum is invariant under equivalent representations. Moreover, it is invariant under weakly equivalent representations.  Let $(\pi, H)$ and $(\rho, K)$ be unitary representations of group $G$. We say that $\pi$ is weakly contained in $\rho$ if for every $\xi\in H$, every compact subset $Q$ of $G$ and every $\varepsilon>0$, there exists $\eta_1,\cdots,\eta_n\in K$ such that,
	\[|\langle\pi(x)\xi,\xi\rangle-\sum\limits_{i=1}^n\langle\rho(x)\eta_i,\eta_i\rangle|<\varepsilon,\quad \forall x\in Q.\]
	We shall write $\pi\prec\rho$ for this. If $\pi \prec\rho$ and $\rho\prec\pi$, we say that $\pi$ and $\rho$ are weakly equivalent and denote by $\pi\sim\rho$. Generally speaking, it is difficult to determine whether two representations are weakly equivalent from definition. However, in \cite{dixmier1982c}, it is proved that $\pi\prec\rho$ if and only if the canonical homomorphism $\rho(g)\mapsto\pi(g), g\in G$, can extend to a unital $*$-homomorphism from $C^*_{\rho}(G)$ onto $C^*_{\pi}(G)$. It implies that if $\rho(g)$ is invertible in $C^*_{\rho}(G)$, then $\pi(m)$ is invertible in $C^*_{\pi}(G)$. Moveover, $\pi\prec\rho$ implies $P(A_{\pi})\subset P(A_{\rho})$. So  $P(A_{\pi})= P(A_{\rho})$ if these two representations are weakly equivalent. 
	
	Conversely, it is a natural question whether the projective joint spectrum determines the representation up to weak equivalence. It is often not the case, but we will give some examples late in this paper. The infinite dihedral group $D_{\infty}$ is the group generated by rotations and reflections of the plane that preserves the origin. R. Yang gives detailed description on properties of joint projective spectrum of $D_{\infty}$ in \cite{Yang2023JointSI}. After introducing the concept of self-similar representation, in particular Koopman representation, it is proved that the Koopman representation and left regular representation of $D_{\infty}$ is weakly equivalent through the technique of projective joint spectrum. Koopman representation itself is widely studied in fields of ergodic theory, dynamical systems, and group representations. We refer the readers to \cite{dudko2015spectra}\cite{dudko2017irreducibility}.
	
	Throughout this article, $D_{\infty h}$ will denote the group $D_{\infty}\times \mathbb{Z}_{2}$, which is isomorphic to $\mathbb{Z}_{2} * \mathbb{Z}_{2}\times \mathbb{Z}_{2}$ and has the presentation 
	\begin{equation}\label{focusedgroup}
		D_{\infty h}=\left\langle a,t,\tau| a^2=t^2=\tau^2=1, a\tau=\tau a, t\tau=\tau t\right\rangle\text{.}
	\end{equation}
	 It can be realized as the group of rigid motions in 3-space consisting of rotations and reflections of the plane that preserves the origin with a reflection $\tau$ through the origin. Usually, the rotations are taken around the axis $Oz$ and reflections with respect to lines of the plane $Oxy$ passing through the origin, for example, see \cite{doi:10.1021/ed021p415.3} for more information and its application on electronic wave functions of molecules. 

	Firstly, we will compute the projective joint spectrum of $D_{\infty h}$ with respect to its left regular representation.
	\begin{thm}
		If we define $P(R_{\lambda_{D_{\infty h}}})$ as the projective joint spectrum of \[R_{\lambda_{D_{\infty h}}}=z_0 \lambda_{D_{\infty h}}(e)+z_1\lambda_{D_{\infty h}}(a)+z_2\lambda_{D_{\infty h}}(t)+z_3\lambda_{D_{\infty h}}(\tau),\] then \[P(R_{\lambda_{D_{\infty h}}})=\bigcup\limits_{-1\leq x\leq 1}\{z\in \mathbb{C}^4:(z_0\pm z_3)^2-z_1^2-z_2^2-2z_1z_2x= 0\}.\]
	\end{thm}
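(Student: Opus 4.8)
The plan is to split off the central $\mathbb{Z}_2$-factor $\langle\tau\rangle$ and reduce the whole computation to the infinite dihedral group. Since $\tau$ is central with $\tau^2=e$, the elements $p_{\pm}=\tfrac12(e\pm\tau)$ are central projections in $C^{*}_{\lambda}(D_{\infty h})$ with $p_{+}+p_{-}=e$ and $p_{+}p_{-}=0$, so $C^{*}_{\lambda}(D_{\infty h})=p_{+}C^{*}_{\lambda}(D_{\infty h})p_{+}\oplus p_{-}C^{*}_{\lambda}(D_{\infty h})p_{-}$. Restricting $\lambda_{D_{\infty h}}$ to the $(\pm1)$-eigenspace of $\lambda_{D_{\infty h}}(\tau)$ — which simply interchanges the two cosets $D_{\infty}\times\{e\}$ and $D_{\infty}\times\{\tau\}$ in $\ell^{2}(D_{\infty h})$ — identifies each corner with $C^{*}_{\lambda}(D_{\infty})$, under the isomorphism $e\mapsto(e,e)$, $a\mapsto(a,a)$, $t\mapsto(t,t)$, $\tau\mapsto(e,-e)$. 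By spectral permanence it follows that $R_{\lambda_{D_{\infty h}}}(z)=z_{0}e+z_{1}a+z_{2}t+z_{3}\tau$ is invertible in $C^{*}_{\lambda}(D_{\infty h})$ if and only if \emph{both} $(z_{0}+z_{3})e+z_{1}a+z_{2}t$ and $(z_{0}-z_{3})e+z_{1}a+z_{2}t$ are invertible in $C^{*}_{\lambda}(D_{\infty})$; equivalently, $z\in P(R_{\lambda_{D_{\infty h}}})$ iff $(z_{0}+z_{3},z_{1},z_{2})\in P(R_{\lambda_{D_{\infty}}})$ or $(z_{0}-z_{3},z_{1},z_{2})\in P(R_{\lambda_{D_{\infty}}})$, where $P(R_{\lambda_{D_{\infty}}})$ is the projective joint spectrum of the pencil $w_{0}e+w_{1}a+w_{2}t$ for $D_{\infty}$.

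Next I would compute $P(R_{\lambda_{D_{\infty}}})$. As $D_{\infty}$ is amenable, $C^{*}_{\lambda}(D_{\infty})=C^{*}(D_{\infty})$, and this algebra admits the concrete matrix-field model $C^{*}(D_{\infty})\cong\{f\in C([0,\pi],M_{2}(\mathbb{C})):f(0),f(\pi)\text{ diagonal}\}$ obtained from the unitary dual of $D_{\infty}$ (the two-dimensional irreducibles $\pi_{\theta}$, $\theta\in(0,\pi)$, which after a fixed unitary conjugation take the form $\pi_{\theta}(a)=\bigl(\begin{smallmatrix}1&0\\0&-1\end{smallmatrix}\bigr)$, $\pi_{\theta}(t)=\bigl(\begin{smallmatrix}\cos\theta&i\sin\theta\\-i\sin\theta&-\cos\theta\end{smallmatrix}\bigr)$, together with the four characters appearing at $\theta=0,\pi$); this is also recorded in \cite{grigorchuk2017joint}. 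An element of this algebra is invertible precisely when it is pointwise invertible (the inverse is automatically continuous and respects the boundary conditions, since the off-diagonal entry involves $\sin\theta$ and vanishes at $\theta=0,\pi$). Hence $w_{0}e+w_{1}a+w_{2}t$ is invertible iff $\pi_{\theta}(w_{0}e+w_{1}a+w_{2}t)$ is invertible for every $\theta\in[0,\pi]$, and a direct $2\times2$ determinant computation gives $\det\pi_{\theta}(w_{0}e+w_{1}a+w_{2}t)=w_{0}^{2}-w_{1}^{2}-w_{2}^{2}-2w_{1}w_{2}\cos\theta$. Writing $x=\cos\theta$, this yields
\[
P(R_{\lambda_{D_{\infty}}})=\bigcup_{-1\le x\le1}\{w\in\mathbb{C}^{3}:w_{0}^{2}-w_{1}^{2}-w_{2}^{2}-2w_{1}w_{2}x=0\}.
\]

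Substituting $w=(z_{0}\pm z_{3},z_{1},z_{2})$ into this description and taking the union over the two choices of sign gives exactly
\[
P(R_{\lambda_{D_{\infty h}}})=\bigcup_{-1\le x\le1}\{z\in\mathbb{C}^{4}:(z_{0}\pm z_{3})^{2}-z_{1}^{2}-z_{2}^{2}-2z_{1}z_{2}x=0\},
\]
which is the assertion.

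The two structural inputs — that $\tau$ central and of order two splits $C^{*}_{\lambda}(D_{\infty h})$ into two copies of $C^{*}_{\lambda}(D_{\infty})$, and that $C^{*}(D_{\infty})$ is a field of $2\times2$ matrix algebras over $[0,\pi]$ — are what drive the proof, and the determinant computation and final substitution are routine. I expect the only place needing genuine care is the second step: one must verify that the matrix-field model is accurate (equivalently, that every $\pi_{\theta}$ and every boundary character really is weakly contained in $\lambda_{D_{\infty}}$, so that each of them obstructs invertibility) and that the endpoint behaviour is handled correctly. An alternative, more hands-on route to the second step that avoids the dual is to pass to the abelian index-$2$ subgroup $\langle at\rangle\cong\mathbb{Z}$, apply the Fourier transform to identify $C^{*}_{\lambda}(\langle at\rangle)$ with $C(\mathbb{T})$, and realize $C^{*}_{\lambda}(D_{\infty})$ as the crossed product $C(\mathbb{T})\rtimes\mathbb{Z}_{2}$ for the flip action; diagonalizing the pencil over the orbit space $\mathbb{T}/\mathbb{Z}_{2}=[0,\pi]$ gives the same determinant condition.
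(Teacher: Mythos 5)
Your argument is correct, but it takes a genuinely different route from the paper. The paper works spatially: it decomposes $l^2(D_{\infty h})$ into the four cosets $H\oplus tH\oplus\tau H\oplus\tau tH$, writes the pencil as a $4\times4$ operator matrix over the bilateral shift $T$, splits into the cases $z_3=0$ and $z_3\neq 0$, and in the latter case applies the Schur complement to reduce invertibility to that of $z_3^2-R_{\lambda_{D_\infty}}^2(\tilde z)$, which it then factors as the product of the $(z_0-z_3)$- and $(z_0+z_3)$-pencils and analyzes via the spectral measure of $T$. Your central-idempotent decomposition $p_{\pm}=\tfrac12(e\pm\tau)$ achieves that same factorization at the level of the algebra, $C^*_\lambda(D_{\infty h})\cong C^*_\lambda(D_\infty)\oplus C^*_\lambda(D_\infty)$ with $\tau\mapsto(e,-e)$, in one stroke and with no case split on $z_3$ and no Schur complement; it is cleaner and makes the $\pm$ in the answer structurally transparent. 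For the $D_\infty$ step you use the continuous-field model of $C^*(D_\infty)$ over $[0,\pi]$ (with the correct care at the endpoints, where the characters sit), whereas the paper simply quotes Grigorchuk--Yang for the $z_3=0$ case and otherwise recomputes via $T=\int_{\mathbb T}\lambda\,dE(\lambda)$; your determinant computation checks out and reproduces their Theorem 1.1. The only thing your route gives up is the explicit matrix representation \eqref{matrix-representation} and the formula for $K(z)^{-1}$, which the paper reuses heavily in Section 3 to compute the trace of the Maurer--Cartan form; if you wanted to continue to those results you would still need the block-matrix machinery (or redo the trace computation in your direct-sum picture, which is in fact also quite feasible).
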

	A linear functional $\phi$ on a unital Banach algebra $\mathcal{B}$ is called central if $\phi(xy)=\phi(yx)$ for any $x,y\in \mathcal{B}$. In section 3, we will concentrate on the 1-forms generated by central functionals and Maurer-Cartan form.  
	Let $Tr$ and $tr$ be the canonical tracial linear functionals on $C^*(D_{\infty h})$, $C^*(D_{\infty})$ separately, and  $\widetilde{\phi}$ be the central linear functional on $C^*(\mathbb{Z}_2)$ defined by
	\[\widetilde{\phi}(\lambda_{\mathbb{Z}_2}(e))=-1,\qquad \widetilde{\phi}(\lambda_{\mathbb{Z}_2}(\tau))=1.\] Then we will get the following theorem.
	
	\begin{thm}
		$\widetilde{\phi}\otimes tr$ induces a different element besides $Tr(\omega_R(z))$ in the cohomology group $H^1_{de}(P^c(R_{\lambda_{D_{\infty h}}}),\mathbb{Z})$.
	\end{thm}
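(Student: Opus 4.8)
The plan is to reduce everything to the infinite dihedral group by decomposing along the central $\mathbb{Z}_2$. Dualizing the $\mathbb{Z}_2$-factor of $D_{\infty h}=D_\infty\times\mathbb{Z}_2$ gives a $*$-isomorphism $C^*(D_{\infty h})\cong C^*(D_\infty)\oplus C^*(D_\infty)$ under which $\lambda_{D_{\infty h}}(\tau)\mapsto(\mathbf 1,-\mathbf 1)$ and $\lambda_{D_{\infty h}}(g)\mapsto(\lambda_{D_\infty}(g),\lambda_{D_\infty}(g))$ for $g\in D_\infty$. Consequently the pencil splits as $R_{\lambda_{D_{\infty h}}}(z)=R_+(z)\oplus R_-(z)$ with $R_\pm(z)=(z_0\pm z_3)\lambda_{D_\infty}(e)+z_1\lambda_{D_\infty}(a)+z_2\lambda_{D_\infty}(t)$, so that $P^c(R_{\lambda_{D_{\infty h}}})=U_+\cap U_-$ with $U_\pm=\{z:R_\pm(z)\text{ invertible in }C^*(D_\infty)\}$ (matching Theorem~1.2), and $\omega_R=\omega_{R_+}\oplus\omega_{R_-}$ where $\omega_{R_\pm}=R_\pm(z)^{-1}dR_\pm(z)$. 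Writing $\pi_\pm$ for the two coordinate projections onto $C^*(D_\infty)$, a direct check on group elements gives $Tr=\tfrac12(tr\circ\pi_+)+\tfrac12(tr\circ\pi_-)$, and, writing $\widetilde\phi=-\chi_-$ in terms of the sign character $\chi_-$ of $\mathbb{Z}_2$, also $\widetilde\phi\otimes tr=-(tr\circ\pi_-)$. Hence, as closed $1$-forms on $P^c(R_{\lambda_{D_{\infty h}}})$ (closedness coming from the general property of central functionals developed in Section~3),
\[
Tr(\omega_R)=\tfrac12\,tr(\omega_{R_+})+\tfrac12\,tr(\omega_{R_-}),\qquad (\widetilde\phi\otimes tr)(\omega_R)=-\,tr(\omega_{R_-}).
\]

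To separate the two classes it is enough to exhibit loops in $P^c$ on which $tr(\omega_{R_+})$ and $tr(\omega_{R_-})$ have independent periods, and the convenient place to do this is the affine slice $\Sigma=\{z_1=1,\ z_2=0\}$. On $\Sigma$ one has $R_\pm(z)=(z_0\pm z_3)\lambda_{D_\infty}(e)+\lambda_{D_\infty}(a)$, which lies in the commutative $C^*$-subalgebra generated by $\lambda_{D_\infty}(a)$, isomorphic to $\mathbb{C}\oplus\mathbb{C}$ via $\lambda_{D_\infty}(a)\mapsto(1,-1)$; by spectral permanence it is invertible exactly when $z_0\pm z_3\notin\{1,-1\}$, so $\Sigma\cap P^c=\{z\in\Sigma: z_0+z_3\notin\{1,-1\},\ z_0-z_3\notin\{1,-1\}\}$. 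Since on $\mathbb{C}\oplus\mathbb{C}$ the trace is $(x,y)\mapsto\tfrac12(x+y)$, the restriction of $tr(\omega_{R_\pm})$ to $\Sigma$ is $\tfrac12\,d\log\big((z_0\pm z_3)^2-1\big)$. Now take $\gamma_+\subset\Sigma\cap P^c$ to be $z_0+z_3=1+\tfrac12 e^{it}$, $z_0-z_3=0$ $(t\in[0,2\pi])$, and $\gamma_-$ to be $z_0-z_3=1+\tfrac12 e^{it}$, $z_0+z_3=0$; both avoid the forbidden values. Along $\gamma_+$ the function $z_0-z_3$ is constant, so $\oint_{\gamma_+}tr(\omega_{R_-})=0$, while $\oint_{\gamma_+}tr(\omega_{R_+})=\tfrac12\oint_{\gamma_+}d\log\big((z_0+z_3)^2-1\big)=\pi i$ because $z\mapsto(z_0+z_3)^2-1$ has winding number $1$ about the origin along $\gamma_+$ (winding $1$ about $z_0+z_3=1$, winding $0$ about $z_0+z_3=-1$); symmetrically $\oint_{\gamma_-}tr(\omega_{R_-})=\pi i$ and $\oint_{\gamma_-}tr(\omega_{R_+})=0$.

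Assembling these, the period matrix of $\big(Tr(\omega_R),\,(\widetilde\phi\otimes tr)(\omega_R)\big)$ against $(\gamma_+,\gamma_-)$ is $\left(\begin{smallmatrix}\pi i/2 & \pi i/2\\ 0 & -\pi i\end{smallmatrix}\right)$, whose determinant is nonzero; hence $[Tr(\omega_R)]$ and $[(\widetilde\phi\otimes tr)(\omega_R)]$ are linearly independent in $H^1_{de}(P^c(R_{\lambda_{D_{\infty h}}}))$, so in particular $\widetilde\phi\otimes tr$ induces an element distinct from $Tr(\omega_R)$. Moreover each of $tr(\omega_{R_\pm})$ has all periods in $\pi i\mathbb{Z}$ (one sees this from the realization of $C^*(D_\infty)$ as $M_2$-valued functions on $[0,\pi]$, diagonal at the endpoints, in which $tr(\omega_{R_\pm})$ is cohomologous to $\tfrac12\,d\log\det_{M_2}R_\pm(z)(\theta_0)$ for any fixed $\theta_0$, a logarithmic derivative of a nowhere-vanishing holomorphic function), so after the standard normalization both $Tr(\omega_R)$ and $\widetilde\phi\otimes tr$ determine genuine integral classes, and the displayed period matrix shows they span a rank-two free subgroup of $H^1_{de}(P^c,\mathbb{Z})$ — the two generators advertised in the abstract.

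The distinctness and independence step is not where the difficulty lies: once the $D_\infty\times\mathbb{Z}_2$ splitting is set up and one passes to the slice $\Sigma$ (where the pencil collapses into the two-dimensional commutative algebra $C^*(\langle a\rangle)$), the periods become elementary winding numbers. The real work is in the bookkeeping that precedes it — identifying $\widetilde\phi\otimes tr$ with $-(tr\circ\pi_-)$, checking that the chosen loops really avoid the whole joint spectrum $P(R_{\lambda_{D_{\infty h}}})$ rather than just one of its slices (which is exactly why the slice $\{z_1=1,\ z_2=0\}$, where the spectral condition has no $\theta$-dependence, is the right choice), and pinning down the precise period lattices so that the normalization landing in $H^1_{de}(\cdot,\mathbb{Z})$ is correct. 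If one wants the still stronger statement that these two classes exhaust $H^1_{de}(P^c,\mathbb{Z})$, one must additionally compute this group — for instance by exhibiting $U_\pm$ as cylinders over the $D_\infty$-resolvent set and running Mayer–Vietoris using the known first cohomology of the latter — but this goes beyond what the stated theorem requires.
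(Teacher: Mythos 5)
Your argument is correct, but it follows a genuinely different route from the paper, so a comparison is in order. The paper works directly with the $4\times 4$ operator matrix for $R^{-1}(z)$ coming from the Schur complement, writes $(\widetilde{\phi}\otimes tr)(R^{-1}(z))$ and $(\widetilde{\phi}\otimes tr)(R^{-1}(z)a)$ as explicit integrals, and then evaluates the two mixed partials $\partial_{z_1}\big((\widetilde{\phi}\otimes tr)(R^{-1}(z))\big)$ and $\partial_{z_0}\big((\widetilde{\phi}\otimes tr)(R^{-1}(z)a)\big)$ at the single point $p=(1,8,4,2)$, deducing non‑proportionality to $Tr(\omega_R)$ from their inequality there. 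Your route --- splitting $C^*(D_{\infty h})\cong C^*(D_\infty)\oplus C^*(D_\infty)$ along the two characters of the central $\mathbb{Z}_2$, identifying $Tr=\tfrac12(tr\circ\pi_+)+\tfrac12(tr\circ\pi_-)$ and $\widetilde{\phi}\otimes tr=-(tr\circ\pi_-)$, and then separating the two closed forms by their periods over explicit loops in the slice $z_1=1$, $z_2=0$ --- buys two things. First, it is the cohomologically robust way to distinguish the classes: since $\widetilde{\phi}\otimes tr$ is central, $(\widetilde{\phi}\otimes tr)(\omega_R)=-tr(\omega_{R_-})$ is closed, so its mixed partials must agree everywhere, and a genuine discrepancy of mixed partials would say the form defines no de~Rham class at all; your period computation sidesteps this tension entirely. (In fact your decomposition gives $(\widetilde{\phi}\otimes tr)(R^{-1}(z))=-tr\big(R_-(z)^{-1}\big)=\frac{1}{2\pi}\int_0^{2\pi}\frac{(z_3-z_0)\,G_\theta^{+}(z)}{G_\theta^{-}(z)G_\theta^{+}(z)}\,d\theta$, whose numerator carries $(z_0+z_3)^2$ where the paper's displayed formula has $z_0^2-z_3^2$; a sanity check at $z_1=z_2=0$ gives $-1/(z_0-z_3)$ for the direct group-algebra computation versus $-1/(z_0+z_3)$ for the paper's integral, and with the corrected numerator the two mixed partials at $p$ both equal $\frac{1}{2\pi}\int_0^{2\pi}\frac{16+8\cos\theta}{(79+64\cos\theta)^2}d\theta$.) Second, your $2\times 2$ period matrix with nonzero determinant gives the stronger conclusion of linear independence of the two classes, not merely non‑proportionality, which is exactly what the "two generators" claim requires. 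The only points you leave compressed --- the $\pi i\mathbb{Z}$‑integrality of the periods and the normalization needed to speak of $H^1_{de}(\cdot,\mathbb{Z})$ rather than $H^1_{de}(\cdot,\mathbb{C})$ --- are secondary to the distinctness statement and are not treated more carefully in the paper either.
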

	
	Section 4 will be devoted to give some examples to the self-similar problem raised in \cite{grigorchuk2017joint}. Finally,  We will discover relations between the Koopman representation of self similar groups and left regular representations.
	
	\begin{thm}
		The Koopman representation $\rho$ and left regular representation $\lambda$ of $D_{\infty h}$ are weakly equivalent.

	\end{thm}

	\section{Projective Joint Spectrum of \texorpdfstring{$D_{\infty h}$}{}}
	
	In this section, we will compute the projective joint spectrum of $D_{\infty h}$ with respect to the left regular representation.
	
	For a discrete group $G$, the group algebra $\mathbb{C}[G]$ is the complex linear space generated by elements in $G$, i.e.
	\[\mathbb{C}[G]=\{f|f=\sum\limits_{g\in G}a_g g, a_g\in \mathbb{C}\}\text{.}\] It is a $*$ -algebra under the conjugate operation defined by
	\begin{equation*}
		f^*=(\sum\limits_{g\in G}a_g g	)^*=\sum\limits_{g\in G}\overline{a_g}g^{-1}\text{,}
	\end{equation*}
	where $-$ represents complex conjugation and $e$ is the identity of group. Consider a positive definite function $tr$ on $\mathbb{C}[G]$ defined by $tr(f)=a_e$. By GNS construction for groups in \cite{bekka2008kazhdan}, we can get a GNS triple $(\pi_{tr},H_{tr},e_{tr})$ 
	, where $e_{tr}=e$, the Hilbert space $H_{tr}=\overline{span}\{\pi_{tr}(g)e,g\in G\} $,   and $\pi_{tr}$ is defined by $\pi_{tr}(g_1)g_2=g_1g_2$.  Let $U: H_{tr}\rightarrow l^2(G)$ be the unitary map defined by
	\[U(g)=\delta_g \quad \forall g\in G,\]
	where $\delta_g$ is the function that takes value 1 at $g$ and 0 otherwise. The left regular representation  of group $G$ on $l^2(G)$, denoted by $\lambda_G$, is defined by \[\lambda_G(s)g(t)=g(s^{-1}t) \quad \forall s \in G, g \in l^2(G).\] It is well known that $\pi_{tr}$ is unitarily equivalent to the left regular representation $\lambda_G$ of group $G$ via the map $U$. 
	

	By the presentation of group in \ref{focusedgroup}, $D_{\infty h}$ consists of elements that have the form of $(at)^k$, $t(at)^k$, $\tau (at)^k$, $\tau t(at)^k$. From the GNS construction mentioned above, the Hilbert space $H_{tr}$ can be decomposed as $H_{tr}=H\oplus tH\oplus \tau H\oplus \tau tH$, where
	\begin{equation*}
		H=\{f=\sum\limits_{j=-\infty}\limits^{\infty}a_j(at)^k:\sum\limits_{j=-\infty}\limits^{\infty}\left| a_j\right|^2< \infty\}\text{.}
	\end{equation*}
    Multiplication by $at$ on the Hilbert space $H$ will be denoted by $T$ in the sequel.
	By the unitary map $V\left((at)^k\right)=e^{ik\theta}$, $H$ is isomorphic to $L^2(\mathbb{T},\frac{1}{2\pi}d\theta)$ and $T$ is unitarily equivalent to the bilateral shift operator.

	Let \[R_{\lambda_{D_{\infty h}}}(z)=z_0 \lambda_{D_{\infty h}}(e)+z_1\lambda_{D_{\infty h}}(a)+z_2\lambda_{D_{\infty h}}(t)+z_3\lambda_{D_{\infty h}}(\tau)\]
	be the complex linear combination of the tuple $(\lambda_{D_{\infty h}}(e),\lambda_{D_{\infty h}}(a),\lambda_{D_{\infty h}}(t),\lambda_{D_{\infty h}}(\tau))$. Since projective joint spectrum is invariant up to unitary equivalence, we will omit the unitary operators in the following if there is no confusion.
	
	\begin{thm}\label{joint-spec}
		If we define $P(R_{\lambda_{D_{\infty h}}})$ as the projective joint spectrum of $D_{\infty h}$ related to the left regular representation, then \[P(R_{\lambda_{D_{\infty h}}})=\bigcup\limits_{-1\leq x\leq 1}\{z\in \mathbb{C}^4:(z_0\pm z_3)^2-z_1^2-z_2^2-2z_1z_2x= 0\}.\]
	\end{thm}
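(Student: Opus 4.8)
The plan is to reduce everything to the infinite dihedral group by peeling off the central $\mathbb{Z}_2$-factor. Since $\tau$ is central with $\tau^2=e$, the unitary $\lambda_{D_{\infty h}}(\tau)$ is a self-adjoint involution, so $\ell^2(D_{\infty h})$ splits orthogonally as $H_+\oplus H_-$, the $(\pm1)$-eigenspaces of $\lambda_{D_{\infty h}}(\tau)$; under the identification $\ell^2(D_{\infty h})\cong\ell^2(D_\infty)\otimes\ell^2(\mathbb{Z}_2)$ with $\ell^2(\mathbb{Z}_2)=\mathbb{C}_+\oplus\mathbb{C}_-$, both $H_\pm$ are copies of $\ell^2(D_\infty)$ on which $\lambda_{D_{\infty h}}(a)$ and $\lambda_{D_{\infty h}}(t)$ act as $\lambda_{D_\infty}(a)$ and $\lambda_{D_\infty}(t)$. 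Hence $R_{\lambda_{D_{\infty h}}}(z)$ is block-diagonal with respect to $H_+\oplus H_-$, the block on $H_\pm$ being $M(z_0\pm z_3,z_1,z_2)$, where $M(w_0,z_1,z_2):=w_0 I+z_1\lambda_{D_\infty}(a)+z_2\lambda_{D_\infty}(t)$. Thus $z\in P(R_{\lambda_{D_{\infty h}}})$ exactly when $M(z_0+z_3,z_1,z_2)$ or $M(z_0-z_3,z_1,z_2)$ fails to be invertible, and it suffices to decide invertibility of $M(w_0,z_1,z_2)$ for a scalar $w_0$.

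For that I would coordinatize $\lambda_{D_\infty}$ through the index-two cyclic subgroup $\langle at\rangle$. Splitting $D_\infty=\langle at\rangle\sqcup\langle at\rangle t$ gives $\ell^2(D_\infty)=\ell^2(\langle at\rangle)\oplus\ell^2(\langle at\rangle t)$; using $a^2=t^2=e$ one computes $t\,(at)^k=(at)^{-k}t$ and $a\,(at)^k=(at)^{1-k}t$, and after Fourier transform (so that $\ell^2(\langle at\rangle)\cong L^2(\mathbb{T})$ with left multiplication by $at$ realized as $T$, multiplication by $e^{i\theta}$, as in the paragraph preceding the theorem) this yields $\lambda_{D_\infty}(t)=\left(\begin{smallmatrix}0&J\\J&0\end{smallmatrix}\right)$ and $\lambda_{D_\infty}(a)=\left(\begin{smallmatrix}0&TJ\\TJ&0\end{smallmatrix}\right)$, where $J$ is the flip $(Jf)(e^{i\theta})=f(e^{-i\theta})$; in particular $JTJ=T^{-1}$ and $J^2=I$. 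Consequently $M(w_0,z_1,z_2)=\left(\begin{smallmatrix}w_0 I&B\\B&w_0 I\end{smallmatrix}\right)$ with $B=(z_1T+z_2 I)J$.

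The rest is a short computation. Conjugating by $\mathrm{diag}(I,-I)$ sends $\left(\begin{smallmatrix}w_0I&B\\B&w_0I\end{smallmatrix}\right)$ to $\left(\begin{smallmatrix}w_0I&-B\\-B&w_0I\end{smallmatrix}\right)$, and since $w_0$ is scalar the product of these two blocks is $\mathrm{diag}(w_0^2I-B^2,\,w_0^2I-B^2)$; hence $M(w_0,z_1,z_2)$ is invertible iff $w_0^2I-B^2$ is. Using $JTJ=T^{-1}$ and $J^2=I$ one gets $B^2=(z_1T+z_2I)(z_1T^{-1}+z_2I)=(z_1^2+z_2^2)I+z_1z_2(T+T^{-1})$, so $w_0^2I-B^2=(w_0^2-z_1^2-z_2^2)I-z_1z_2(T+T^{-1})$. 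Since $T+T^{-1}$ is multiplication by $2\cos\theta$, with spectrum $[-2,2]$, this operator is invertible iff $w_0^2-z_1^2-z_2^2-2z_1z_2x\neq 0$ for all $x\in[-1,1]$. Feeding $w_0=z_0+z_3$ and $w_0=z_0-z_3$ back into the dichotomy of the first paragraph gives precisely $P(R_{\lambda_{D_{\infty h}}})=\bigcup_{-1\le x\le 1}\{z\in\mathbb{C}^4:(z_0\pm z_3)^2-z_1^2-z_2^2-2z_1z_2x=0\}$.

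The main obstacle is the bookkeeping in the middle step: one must track the $\langle at\rangle$-cosets carefully — it is the sign in $t\,(at)^k=(at)^{-k}t$ that produces the flip $J$, hence the factor $T+T^{-1}$ later — and verify that the matrix written for $\lambda_{D_\infty}(a)$ is genuinely self-adjoint (equivalently $JT^{-1}=TJ$), so that the invertibility being tested is the correct $C^*$-algebraic one. Once the two generators are in this normal form, the block identity and the spectral description $\sigma(T+T^{-1})=[-2,2]$ are routine; one may also observe that the two quadrics overlap along $z_3=0$, where the formula recovers the projective joint spectrum of $D_\infty$.
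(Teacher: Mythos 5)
Your proof is correct, but it takes a genuinely different route from the paper. The paper keeps the full $4\times 4$ operator matrix coming from the coset decomposition $H\oplus tH\oplus\tau H\oplus\tau tH$, splits into the cases $z_3=0$ and $z_3\neq 0$, and in the latter case applies a Schur complement together with the factorization $R_{\lambda_{D_\infty}}^2(\tilde z)-z_3^2=\bigl(R_{\lambda_{D_\infty}}(\tilde z)-z_3\bigr)\bigl(R_{\lambda_{D_\infty}}(\tilde z)+z_3\bigr)$ before invoking the spectral theorem for $T$. You instead exploit at the outset that $\lambda(\tau)$ is a central self-adjoint involution, so that $\ell^2(D_{\infty h})$ splits into its $(\pm 1)$-eigenspaces and $R(z)$ block-diagonalizes immediately into the two $D_\infty$ pencils with $z_0$ replaced by $z_0\pm z_3$; this is the representation-theoretic form of the paper's algebraic factorization, and it removes both the case distinction on $z_3$ and the Schur-complement step. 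You then rederive the $D_\infty$ half from scratch via the $\langle at\rangle$-coset decomposition and the flip $J$ (the paper cites Grigorchuk--Yang for this when $z_3=0$), ending at the same spectral fact $\sigma(T+T^{-1})=[-2,2]$; your self-adjointness check $JT^{-1}=TJ$ and the left/right-inverse argument for deducing invertibility of $M$ from that of $w_0^2-B^2$ are both in order. The one thing your streamlined route gives up is the explicit $4\times 4$ matrix realization (\ref{matrix-representation}), which the paper reuses in Section 3 to compute traces of the Maurer--Cartan form, so the longer derivation there has downstream payoff.
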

	
	\begin{proof}
		
		On the orthogonal direct sum $H_{tr}=H\oplus tH\oplus \tau H\oplus \tau tH$ and by the unitary map 
		$W: H\oplus tH\oplus \tau H\oplus \tau tH\rightarrow H\oplus H\oplus H\oplus H$	 defined by
		
		\begin{equation*}
			W=\begin{pmatrix}
				&I_{H}	&0  &0  &0\\
				&0  &I_{H}t	&0  &0\\
				&0  &0  &I_{H}\tau &0\\
				&0  &0  &0  &\tau t\\
			\end{pmatrix},
		\end{equation*}
		we can easily compute that
		\begin{equation}\label{matrix-representation}
			R_{\lambda_{D_{\infty h}}}(z)\overset{W}{\backsimeq}
			\begin{pmatrix}
				z_0  &z_1T+z_2  &z_3  &0\\
				z_1T^*+z_2  &z_0  &0  &z_3\\
				z_3  &0  &z_0  &z_1T+z_2\\
				0  &z_3  &z_1T^*+z_2  &z_0
			\end{pmatrix},
		\end{equation}
		where $A\overset{W}{\backsimeq} B$ means $A$ and $B$ is unitarily equivalent under $W$.
		We divide the argument into two case.
		
		\noindent Case \uppercase\expandafter{\romannumeral1}
		
		If $z_3=0$,then
		\begin{equation*}
			R_{\lambda_{D_{\infty h}}}(z)\overset{W}{\backsimeq}
			\begin{pmatrix}
				z_0  &z_1T+z_2  &0  &0\\
				z_1T^*+z_2  &z_0  &0  &0\\
				0  &0  &z_0  &z_1T+z_2\\
				0  &0  &z_1T^*+z_2  &z_0
			\end{pmatrix}.
		\end{equation*}
		Therefore, it is invertible if and only if $\begin{pmatrix}
			z_0 &z_1T+z_2\\
			z_1T^*+z_2 &z_0\\
		\end{pmatrix}$ is invertible. In this case the projective joint spectrum $P(R_{\lambda_{D_{\infty h}}})$ equals \[P(R_{\lambda_{ D_{\infty}}})=\bigcup\limits_{-1\leq x\leq 1}\{z\in \mathbb{C}^3:z_0^2-z_1^2-z_2^2-2z_1z_2x= 0\}\] from the results in \cite[Theorem 1.1]{grigorchuk2017joint}. 
		~\\
		
		\noindent Case \uppercase\expandafter{\romannumeral2}

		If $z_3\ne 0$,  $\begin{pmatrix}
			z_0 &z_1T+z_2\\
			z_1T^*+z_2 &z_0\\
		\end{pmatrix}$ will be denoted by $R_{\lambda_{D_\infty}}(\tilde{z})$ for simplicity, where $\tilde{z}=(z_0,z_1,z_2)$. Multiplying $\begin{pmatrix}
			0  &I\\
			I  &0\\
		\end{pmatrix}$ on the right,  $R_{\lambda_{ D_{\infty h}}}(z)$ turns into $\begin{pmatrix}
			z_3  &R_{\lambda_{D_\infty}}(\tilde{z})\\
			R_{\lambda_{D_\infty}}(\tilde{z})  &z_3
		\end{pmatrix}$. By the Schur complement trick\cite{haynsworth1968determination}, $R_{\lambda_{ D_{\infty h}}}(z)$ is invertible if and only if $z_3^2-R_{\lambda_{D_\infty}}^2(\tilde{z})$ is invertible.
	
	Using the spectral theorem for normal operators in \cite[Theorem 9.2.2]{conway2019course}, we can write  \begin{equation}\label{spec-reso}
		T=\int_{\mathbb{T}}\lambda dE(\lambda),
	\end{equation} where $dE(\lambda)$ is the spectral measure of $T$. 
	Note that
	\begin{eqnarray*}
	R_{\lambda_{D_\infty}}^2(\tilde{z})-z_3^2&=&\begin{pmatrix}
													z_0-z_3 &z_1T+z_2\\
													z_1T^*+z_2 &z_0-z_3
												\end{pmatrix}\begin{pmatrix}
												z_0+z_3 &z_1T+z_2\\
												z_1T^*+z_2 &z_0+z_3
											\end{pmatrix}.
	\end{eqnarray*}
This follows that  $z_3^2-R_{\lambda_{D_\infty}}^2(\tilde{z})$  is invertible if and only
 if 
 		\begin{equation*}
 			((z_0-z_3)^2-(z_1T+z_2)(z_1T^*+z_2))((z_0+z_3)^2-(z_1T+z_2)(z_1T^*+z_2))
 		\end{equation*}
 is invertible,	or more precisely, by (\ref{spec-reso})
 		\begin{equation*}
			((z_0-z_3)^2-z_1^2-z_2^2-z_1z_2(\lambda+\overline{\lambda}))((z_0+z_3)^2-z_1^2-z_2^2-z_1z_2(\lambda+\overline{\lambda}))\ne 0
		\end{equation*}
		for  each $ \lambda \in \mathbb{T}$. Letting $\lambda=e^{i\theta}$ for $ \theta \in \mathbb{T}$, $x=cos\theta$, the condition will become
		\begin{equation*}
			((z_0-z_3)^2-z_1^2-z_2^2-2z_1z_2x)((z_0+z_3)^2-z_1^2-z_2^2-2z_1z_2x)\ne 0.
		\end{equation*}
		
		In conclusion, $P^c(R_{\lambda_{D_{\infty h}}})=\bigcap\limits_{-1\leq x\leq 1}\{z\in \mathbb{C}^4:(z_0\pm z_3)^2-z_1^2-z_2^2-2z_1z_2x\ne 0\}$, and the theorem is proved by taking complement.
		
	\end{proof}

	 In the following article, we will denote 
	 \begin{equation}\label{suoxie}
	 	\begin{aligned}
	 		G_x^{-}(z)&=&(z_0-z_3)^2-z_1^2-z_2^2-2z_1z_2x,\\
	 		G_x^{+}(z)&=&(z_0+z_3)^2-z_1^2-z_2^2-2z_1z_2x
	 	\end{aligned}
	 \end{equation}
	  and $G_{\theta}^{-}(z)$, $G_{\theta}^{+}(z)$ if $x=cos\theta$.
	
	\section{Trace of Maurer-Cartan Form and De Rham Cohomology group}
	By the defintion in \cite{yang2009projective},
	for a tuple $A=(A_1,A_2,\dots,A_n)$, the Maurer-Cartan form $\omega_A$ is a operator valued 1-form defined as 
	\begin{equation*}
		\omega_A(z)=A^{-1}(z)dA(z)=\sum\limits_{i=1}^n A(z)^{-1}A_idz_i \qquad \forall z\in P^c(A).
	\end{equation*}
	
	A linear functional $\phi$ on a unital Banach algebra $\mathcal{B}$ is called central if $\phi(xy)=\phi(yx)$ for any $ x,y\in \mathcal{B}$. In \cite[Theorem 3.2]{yang2009projective}, it is proved that if $\phi$ is central and $\phi(I)\ne 0$, then $\phi(\omega_A)$ is a non-trivial element in De Rham cohomology group $H^1_{de}(P^c(A),\mathbb{C})$. This section will be devoted to 1-forms induced by different central linear functionals.
	
	For a discrete group $G$, it is well known that its reduced group $C^*$-algebra $C_r^*(G)$ admits a canonical tracial state  
	\begin{equation}\label{Ctrformula}
		tr(a)=\left<a\delta_e,\delta_e\right> \, \forall a \in C_r^*(G).
	\end{equation}
	In \cite[Corollary 4.3]{breuillard2017c}, it is proved that reduced group $C^*$-algebra $C_r^*(G)$ has only one tracial state if and only if $G$ is amenable or none of its normal subgroup is amenable. Since $D_{\infty h}$ itself is amenable, so the canonical trace $Tr $ is the unique tracial state on $C_r^*(D_{\infty h})$.
	
	Since $D_{\infty}=\mathbb{Z}\rtimes\mathbb{Z}_2$ and $D_{\infty h}=D_{\infty}\times \mathbb{Z}_2$, the inclusion map
	\[\mathbb{Z}\hookrightarrow D_{\infty}\hookrightarrow D_{\infty h}\] will induce the inclusion map of group $C^*$ algebras. Based on this observation, we have the following proposition.
	
	\begin{prop}
		The cannonical traces on $C^*_r(\mathbb{Z}), C^*_r(D_{\infty}), C^*_r(D_{\infty h})$ coincide by restriction.
	\end{prop}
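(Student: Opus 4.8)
The plan is to reduce the statement to the behaviour of the canonical trace on group elements. Recall from the defining formula \eqref{Ctrformula} that for any discrete group $G$ the canonical trace satisfies $tr(\lambda_G(g)) = \langle \lambda_G(g)\delta_e, \delta_e\rangle = \langle \delta_g, \delta_e\rangle$, which equals $1$ if $g = e$ and $0$ otherwise. Since the inclusion $\mathbb{Z} \hookrightarrow D_{\infty h}$ factors as $\mathbb{Z} \hookrightarrow D_\infty \hookrightarrow D_{\infty h}$, it suffices to treat a single inclusion $\iota \colon H \hookrightarrow G$ of discrete groups and then compose.

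First I would check that $\iota$ induces an injective unital $*$-homomorphism $\iota_* \colon C^*_r(H) \to C^*_r(G)$ with $\iota_*(\lambda_H(h)) = \lambda_G(h)$. This is standard: choosing representatives for the right cosets $H\backslash G$ identifies $\ell^2(G)$ with $\ell^2(H) \otimes \ell^2(H\backslash G)$ in such a way that $\lambda_G|_H$ becomes $\lambda_H \otimes I$; hence the assignment $\lambda_H(h) \mapsto \lambda_G(h)$ extends from $\mathbb{C}[H]$ to an isometric $*$-homomorphism on the reduced completions. Composing the two resulting embeddings realises $C^*_r(\mathbb{Z})$ as a $C^*$-subalgebra of $C^*_r(D_\infty)$ and of $C^*_r(D_{\infty h})$, compatibly with the chain of inclusions.

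With these identifications in place, I would compare the traces on group elements. For $h \in H$ the element $\iota_*(\lambda_H(h)) = \lambda_G(h)$ has $tr_G$-value $1$ if $\iota(h) = e_G$ and $0$ otherwise, while $tr_H(\lambda_H(h))$ is $1$ if $h = e_H$ and $0$ otherwise; since $\iota$ is injective these two quantities coincide. Hence $tr_G \circ \iota_*$ and $tr_H$ agree on $\{\lambda_H(h) : h \in H\}$, which spans a dense $*$-subalgebra of $C^*_r(H)$, and both functionals are norm-continuous, so they agree on all of $C^*_r(H)$. Applying this with $H = \mathbb{Z}$, $G = D_\infty$ and then with $H = D_\infty$, $G = D_{\infty h}$ yields the proposition.

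The only point that is not purely formal is the first step: functoriality of the \emph{reduced} group $C^*$-algebra construction along subgroup inclusions, which (unlike for full group $C^*$-algebras) genuinely requires the coset description of $\lambda_G|_H$. Everything after that is linearity together with a density argument. I note that one could even dispense with injectivity of $\iota_*$: the weak containment $\lambda_H \prec \lambda_G|_H$ is immediate from the direct-sum decomposition above, so \cite{dixmier1982c} already supplies a unital $*$-homomorphism $C^*_r(H) \to C^*_r(G)$ sending $\lambda_H(h)$ to $\lambda_G(h)$, and the trace comparison uses only this property.
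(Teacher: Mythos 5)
Your proof is correct, and the underlying mechanism is the same as the paper's: the canonical trace is the vector state at $\delta_e$, the cyclic vectors $\delta_{e_H}$ and $\delta_{e_G}$ correspond under the embedding, and so the traces must agree. The difference is in execution. The paper handles the two inclusions by two ad hoc decompositions --- $\ell^2(D_\infty)=\ell^2(\mathbb{Z})\oplus\ell^2(t\mathbb{Z})$ for the first, and the tensor factorization coming from $D_{\infty h}=D_\infty\times\mathbb{Z}_2$ (so $\delta_{e_{D_{\infty h}}}=\delta_{e_{\mathbb{Z}_2}}\otimes\delta_{e_{D_\infty}}$) for the second --- and then evaluates $\langle a\delta_e,\delta_e\rangle$ directly for an arbitrary element $a$ of the subalgebra. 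You instead prove one general lemma, the coset decomposition $\ell^2(G)\cong\ell^2(H)\otimes\ell^2(H\backslash G)$ with $\lambda_G|_H\cong\lambda_H\otimes I$, valid for any subgroup of any discrete group, and then compare the traces only on group elements, where both equal the indicator of the identity, finishing by density of $\mathbb{C}[H]$ and norm continuity of states. This buys uniformity (the same argument covers both inclusions and indeed any chain of subgroups) and makes explicit the one genuinely non-formal input, namely functoriality of $C^*_r$ along subgroup inclusions, which the paper uses implicitly. One small correction to your closing aside: with the paper's convention that $\pi\prec\rho$ yields a $*$-homomorphism $C^*_\rho\to C^*_\pi$ sending $\rho(g)$ to $\pi(g)$, the containment that produces the map $C^*_r(H)\to C^*_{\lambda_G|_H}(H)$, $\lambda_H(h)\mapsto\lambda_G(h)$, is $\lambda_G|_H\prec\lambda_H$ rather than $\lambda_H\prec\lambda_G|_H$; both containments do hold here because $\lambda_G|_H$ is a multiple of $\lambda_H$, so nothing breaks, but the direction you cite gives the homomorphism the other way.
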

	
	\begin{proof}
		Firstly, denote the cannonical traces on these $C^*$ algebras be $tr_{\mathbb{Z}}$, $tr$ and $Tr$. 
		
		By the form of elements in $D_{\infty}$, $l^2(D_\infty)=l^2(\mathbb{Z})\oplus l^2(t\mathbb{Z})$. This follows that any $a\in C^*_r(\mathbb{Z})$ can be treated as $a\oplus 0\in C^*_r(D_{\infty})$. Thus
		\[tr_\mathbb{Z}(a)=\left<a\delta_{e_\mathbb{Z}},\delta_{e_\mathbb{Z}}\right>=\left<(a\oplus 0)\delta_{e_{D_{\infty}}},\delta_{e_{D_{\infty}}}\right>=tr(a\oplus 0),\]
		which leads to $tr|_{\mathbb{Z}}=tr_{\mathbb{Z}}$.
		
		Since \[C^*_r(D_{\infty h})=\overline{C^*_r(\mathbb{Z})\otimes C^*_r(D_{\infty})}^{B(l^2(D_{\infty h}))}\]
		and $\delta_{e_{D_{\infty h}}}=\delta_{e_\mathbb{Z}}\otimes \delta_{e_{D_{\infty}}}$, for any $a\in C^*_r(D_{\infty})$,
		\[tr(a)=\left<a\delta_{e_{D_{\infty}}},\delta_{e_{D_{\infty}}}\right>=\left<a\delta_{e_{D_{\infty}}},\delta_{e_{D_{\infty}}}\right>\left<\delta_{e_\mathbb{Z}},\delta_{e_\mathbb{Z}}\right>=\left<(a\otimes 1)\delta_{e_{D_{\infty h}}},\delta_{e_{D_{\infty h}}}\right>=Tr(a\otimes 1).\]
		Therefore, $Tr|_{D_{\infty}}=tr$.

	\end{proof}
	By this proposition, we will not distinguish the cannonical traces on such groups, and denote them by $tr$.

	Noting that Group Von Neumann algebra $L(G)$ is closure of $\lambda_G(\mathbb{C}[G])$ with respect to the weak topology in the Hilbert space, thus formula (\ref{Ctrformula}) can be natrually extended to the group Von Neumann algebra. In the previous section, we get the matrix representation of $R_{\lambda_{D_{\infty h}}}$ by \eqref{matrix-representation}. In order to compute trace of the Maurer-Cartan form, we can cannonically define the extended trace $ \widetilde{tr}$ on $4\times 4$ matrices with $L(\mathbb{Z})$ entries by
	\begin{equation}\label{trace-formula}
		\widetilde{tr}((a_{ij})_{i,j=1}^4):=\frac{1}{4}tr(\sum\limits_{i=1}^4 a_{ii}).
	\end{equation}
and $\sim$ will be omitted if there is no confusion.

	By the observation in \cite{grigorchuk2017joint}, we have
	\begin{equation}\label{operator-resolution}
		tr(dE(\lambda))=tr(dE(e^{i\theta}))=dtr(E(e^{i\theta}))=\frac{1}{2\pi}d\theta,
	\end{equation} where  $E(\lambda)$ is the spectral resolution in (\ref{spec-reso}). 

For $D_{\infty h}$, the Maurer-Cartan form is
\begin{equation*}
\omega_R(z)=R^{-1}(z)dR(z)=R^{-1}(z)(dz_0+adz_1+tdz_2+\tau dz_3),
\end{equation*} where $R(z)$ is defined in (\ref{matrix-representation}). The following proposition will focus on calculating $tr(\omega_R(z))$.
	
	\begin{prop}\label{traceM}
		The trace of Maurer-Cartan form $\omega_R(z)$ is 
		\[tr(\omega_{R}(z))=\displaystyle d\bigg(\frac{1}{8\pi}\int_0^{2\pi}log((z_0-z_3)^2-z_1^2-z_2^2-2z_1z_2cos\theta)((z_0+z_3)^2-z_1^2-z_2^2-2z_1z_2cos\theta))d\theta\bigg)\]
		for $z\in P^c(R_{\lambda_{D_{\infty h}}})$.
	\end{prop}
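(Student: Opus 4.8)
The plan is to diagonalise the $4\times4$ matrix representation \eqref{matrix-representation} by a \emph{constant} unitary and then reduce everything to a $2\times2$ computation over $L(\mathbb{Z})$. Grouping \eqref{matrix-representation} into $2\times2$ blocks of $2\times2$ matrices, it reads $\begin{pmatrix} S & z_3I\\ z_3I & S\end{pmatrix}$ with $S=R_{\lambda_{D_\infty}}(\tilde z)=\begin{pmatrix} z_0 & z_1T+z_2\\ z_1T^*+z_2 & z_0\end{pmatrix}$; conjugating by the constant block Hadamard unitary $\tfrac{1}{\sqrt2}\begin{pmatrix} I & I\\ I & -I\end{pmatrix}$ (which is its own inverse) turns $R$ into $N(z_0+z_3)\oplus N(z_0-z_3)$, where $N(\alpha):=\begin{pmatrix} \alpha & z_1T+z_2\\ z_1T^*+z_2 & \alpha\end{pmatrix}$. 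Since this unitary is independent of $z$, the Maurer--Cartan form transforms block-diagonally, $\omega_R\simeq\omega_{N(z_0+z_3)}\oplus\omega_{N(z_0-z_3)}$, and $\widetilde{tr}$ of \eqref{trace-formula} is invariant under such a conjugation, so it suffices to compute $tr_2\big(\omega_{N(\alpha)}\big)$ for $\alpha=z_0\pm z_3$ --- where $tr_2\big((a_{ij})_{i,j=1}^2\big):=tr(a_{11})+tr(a_{22})$ on $M_2(L(\mathbb{Z}))$ --- and then add the two contributions with weight $\tfrac14$.

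For the $2\times2$ computation, observe that every entry of $N(\alpha)$ lies in the commutative $C^*$-algebra generated by the unitary $T$, so $N(\alpha)^{-1}=\delta^{-1}\begin{pmatrix} \alpha & -(z_1T+z_2)\\ -(z_1T^*+z_2) & \alpha\end{pmatrix}$ with $\delta:=\alpha^2-(z_1T+z_2)(z_1T^*+z_2)=\alpha^2-z_1^2-z_2^2-z_1z_2(T+T^*)$, which is invertible on $P^c(R_{\lambda_{D_{\infty h}}})$ by Theorem \ref{joint-spec}. As $T$ is constant, $dN(\alpha)=\begin{pmatrix} d\alpha & T\,dz_1+dz_2\\ T^*\,dz_1+dz_2 & d\alpha\end{pmatrix}$, and multiplying out $N(\alpha)^{-1}dN(\alpha)$ and adding the two diagonal entries gives
\[
(\omega_{N(\alpha)})_{11}+(\omega_{N(\alpha)})_{22}=\delta^{-1}\big(2\alpha\,d\alpha-2z_1\,dz_1-2z_2\,dz_2-z_1(T+T^*)\,dz_2-z_2(T+T^*)\,dz_1\big).
\]
Substituting the spectral resolution $T=\int_{\mathbb{T}}\lambda\,dE(\lambda)$ from \eqref{spec-reso}, so that $T+T^*=\int 2\cos\theta\,dE(e^{i\theta})$ and $\delta=\int G_\theta(\alpha)\,dE(e^{i\theta})$ with $G_\theta(\alpha)=\alpha^2-z_1^2-z_2^2-2z_1z_2\cos\theta$ (note $G_\theta(z_0\pm z_3)=G_\theta^{\pm}(z)$ of \eqref{suoxie}), the decisive remark is that for each fixed $\theta$ the bracket above, with $T+T^*$ replaced by $2\cos\theta$, is exactly the holomorphic differential $d_zG_\theta(\alpha)$. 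Hence the diagonal sum equals $\int_{\mathbb{T}}\dfrac{d_zG_\theta(\alpha)}{G_\theta(\alpha)}\,dE(e^{i\theta})$.

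Finally, apply $tr$ and use $tr(dE(e^{i\theta}))=\tfrac{1}{2\pi}\,d\theta$ from \eqref{operator-resolution} (along with the interchange of $d$ with the $\theta$-integral and with $tr$) to get
\[
tr_2\big(\omega_{N(\alpha)}\big)=\frac{1}{2\pi}\int_0^{2\pi}\frac{d_zG_\theta(\alpha)}{G_\theta(\alpha)}\,d\theta=d\!\left(\frac{1}{2\pi}\int_0^{2\pi}\log G_\theta(\alpha)\,d\theta\right);
\]
taking $\alpha=z_0+z_3$ and $\alpha=z_0-z_3$, combining with weight $\tfrac14$ and merging the two logarithms into $\log\big(G_\theta^{-}(z)G_\theta^{+}(z)\big)$ then yields the asserted identity. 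The step I expect to require the most care is the substitution of the spectral resolution: because $\delta^{-1}$ sits inside the expression, one must justify $tr\big(\delta^{-1}f(T)\big)=\tfrac{1}{2\pi}\int_0^{2\pi}f(e^{i\theta})\,G_\theta(\alpha)^{-1}\,d\theta$ for continuous $f$, which rests on $G_\theta(\alpha)\neq0$ for all $\theta$ on $P^c(R_{\lambda_{D_{\infty h}}})$ (a restatement of Theorem \ref{joint-spec}) together with the Borel functional calculus for the normal operator $T$ under $L(\mathbb{Z})\cong L^\infty(\mathbb{T})$. One should also note that $\log G_\theta(\alpha)$ is only locally single-valued, but the branch ambiguity disappears upon applying $d$, since $d_zG_\theta(\alpha)/G_\theta(\alpha)$ is a globally defined closed holomorphic $1$-form on the resolvent set; the formula is therefore to be read as an identity of $1$-forms.
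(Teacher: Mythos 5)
Your proof is correct, and it takes a genuinely different route from the paper's. The paper inverts $R(z)$ via a Schur complement, which forces a split into the cases $z_3=0$ and $z_3\neq0$ and, within the latter, three further subcases according to whether $z_0\mp z_3$ vanishes; it then grinds out the four quantities $tr(R^{-1}(z))$, $tr(R^{-1}(z)a)$, $tr(R^{-1}(z)t)$, $tr(R^{-1}(z)\tau)$ entry by entry before assembling them into the exact form. Your constant block-Hadamard conjugation $\tfrac{1}{\sqrt2}\left(\begin{smallmatrix}I&I\\ I&-I\end{smallmatrix}\right)$ splits $R(z)$ into $N(z_0+z_3)\oplus N(z_0-z_3)$ once and for all; because the unitary is $z$-independent, the Maurer--Cartan form conjugates along and $\widetilde{tr}$ (a trace on $M_4(L(\mathbb{Z}))$) is unchanged, so the whole computation collapses to the adjugate formula for a $2\times2$ matrix over the commutative algebra generated by $T$, and the diagonal sum of $N(\alpha)^{-1}dN(\alpha)$ is visibly $d\log\det$ of the symbol fiberwise over $\operatorname{sp}(T)$. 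This eliminates every case distinction (your argument does not even need $z_3\neq0$), makes the appearance of $d\log\bigl(G^{-}_\theta G^{+}_\theta\bigr)$ structural rather than accidental, and your closing remarks on the invertibility of $\delta$ on the resolvent set and on reading the identity at the level of $1$-forms address the only points needing care. The one thing the paper's longer computation buys is the explicit closed-form expressions for $tr(R^{-1}(z))$, $tr(R^{-1}(z)a)$, etc., which are reused in the subsequent theorem on $\widetilde{\phi}\otimes tr$; if you wanted those, you would extract them from your block decomposition by keeping track of the individual diagonal (and, for $\widetilde{\phi}\otimes tr$, off-diagonal) entries rather than only their sum.
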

	
	\begin{proof}
		
		As in the proof of Theorem \ref{joint-spec}, the argument will be divided into two parts.
		
		
		\noindent Case \uppercase\expandafter{\romannumeral1}
		
		If $z_3=0$ , then $\omega_R(z)=(z_0+z_1 a+z_2 t)^{-1}(dz_0+adz_1+tdz_2)$. In this case, we can directly use the results in \cite[Propsition 3.2]{grigorchuk2017joint} to get
		\[tr(\omega_R(z))=d\bigg(\frac{1}{4\pi}\int_0^{2\pi}log(z_0^2-z_1^2-z_2^2-2z_1z_2cos\theta)d\theta\bigg),\]
		where $\displaystyle d$ stands for 
		\begin{equation*}
			\frac{\partial}{\partial z_0}dz_0+\frac{\partial}{\partial z_1}dz_1+\frac{\partial}{\partial z_2}dz_2.
		\end{equation*}
		\noindent Case \uppercase\expandafter{\romannumeral2}
		
		If $z_3\ne 0$, we need to compute $tr(R^{-1}(z))$, $tr(R^{-1}(z)a)$, $tr(R^{-1}(z)t)$, and $tr(R^{-1}(z)\tau)$ separately. By Schur complement as in the proof of Theorem \ref{joint-spec}, we get that
		\begin{equation*}
			R^{-1}(z)=\begin{pmatrix}
				-z_3^{-1}R_{\lambda_{D_\infty}}(\tilde{z})K(z)^{-1} &z_3^{-1}+z_3^{-1}R_{\lambda_{D_\infty}}(\tilde{z})K(z)^{-1}R_{\lambda_{D_\infty}}(\tilde{z})z_3^{-1}  \\
				K(z)^{-1}
				&-K(z)^{-1}R_{\lambda_{D_\infty}}(\tilde{z})z_3^{-1} 
			\end{pmatrix}
		\end{equation*}
		where $K(z)=z_3^{-1}(z_3^2-R_{\lambda_{D_\infty}}^2(\tilde{z}))$,  $R_{\lambda_{D_\infty}}(\tilde{z})=\begin{pmatrix}
			z_0 &z_1T+z_2\\
			z_1T^*+z_2 &z_0\\
		\end{pmatrix}$, and $\tilde{z}=(z_0,z_1,z_2)$.
		
		Before moving on the computation, we should take a close look at the relation between $z_0$ and $z_3$ to give a straightforward expression of $K(z)^{-1}$. It cannot hold that $z_0-z_3=0$ and $z_0+z_3=0$ simultaneously, which will contradict the precondition that $z_3\ne 0$. Thus we have three cases to discuss based on the value of $z_0-z_3$ and $z_0+z_3$.
		
		Case $(\romannumeral1)$: if $z_0=z_3$, so $z_0\ne 0$ automatically.
		A direct computation shows that \begin{eqnarray*}
			(R_{\lambda_{D_\infty}}^2(\tilde{z})-z_3^2)^{-1}&=&\begin{pmatrix}
				0  &z_1T+z_2\\
				z_1T^*+z_2  &0
			\end{pmatrix}^{-1}\begin{pmatrix}
				2z_0  &z_1T+z_2\\
				z_1T^*+z_2  &2z_0
			\end{pmatrix}^{-1}\\
			&=&\begin{pmatrix}
				0  &(z_1T^*+z_2)^{-1}\\
				(z_1T+z_2)^{-1}  &0
			\end{pmatrix}\begin{pmatrix}
				R^{11}  &R^{12}\\
				R^{21}  &R^{22}
			\end{pmatrix},
		\end{eqnarray*}
		where
		\begin{eqnarray*}
			R^{11} &=& \frac{1}{2}z_0^{-1}+\frac{1}{4}z_0^{-2}(z_1T+z_2)(2z_0-\frac{1}{2}z_0^{-1}(z_1T^*+z_2)(z_1T+z_2))^{-1}(z_1T^*+z_2),\\
			R^{12} &=& -\frac{1}{2}z_0^{-1}(z_1T+z_2)(2z_0-\frac{1}{2}z_0^{-1}(z_1T^*+z_2)(z_1T+z_2))^{-1},\\
			R^{21} &=& -(2z_0-\frac{1}{2}z_0^{-1}(z_1T^*+z_2)(z_1T+z_2))^{-1}(z_1T^*+z_2)\frac{1}{2}z_0^{-1},
		\end{eqnarray*}
		and
	\noindent		\begin{equation*}
	R^{22} \enspace=\enspace (2z_0-\frac{1}{2}z_0^{-1}(z_1T^*+z_2)(z_1T+z_2))^{-1}.\quad\qquad\qquad\qquad\qquad\qquad\qquad\qquad
		\end{equation*}
		Therefore, \begin{eqnarray*}
			tr(R^{-1}(z))&=&\frac{1}{2}tr\bigg((R_{\lambda_{D_{\infty}}}(\widetilde{z})^2-z_0^2)^{-1}R_{\lambda_{D_{\infty}}}(\widetilde{z})\bigg)\\
			&=&\frac{1}{2}tr(R^{11}+R^{12}z_0(z_1T+z_2)^{-1}+R^{21}z_0(z_1T^*+z_2)^{-1}+R^{22})\\
			&=&-\frac{1}{2}\int_{\mathbb{T}}\frac{2z_0tr(dE(\lambda))}{4z_0^2-z_1^2-z_2^2-z_1z_2(\lambda+\overline{\lambda})}\\
			&=&-\frac{1}{\pi}\int_0^{2\pi}\frac{z_0d\theta}{4z_0^2-z_1^2-z_2^2-2z_1z_2\cos\theta}.
		\end{eqnarray*}
		Using similar argument, we have that
		\begin{eqnarray*}
			tr(R^{-1}(z)a)&=&\frac{1}{2\pi}\int_0^{2\pi}\frac{(z_1+z_2cos\theta)(2z_0^2-z_1^2-z_2^2-2z_1z_2cos\theta)}{(4z_0^2-z_1^2-z_2^2-2z_1z_2cos\theta)(z_1^2+z_2^2+2z_1z_2cos\theta)}d\theta\\
			tr(R^{-1}(z)t)&=&\frac{1}{2\pi}\int_0^{2\pi}\frac{(z_1+z_2cos\theta)(2z_0^2-z_1^2-z_2^2-2z_1z_2cos\theta)}{(4z_0^2-z_1^2-z_2^2-2z_1z_2cos\theta)(z_1^2+z_2^2+2z_1z_2cos\theta)}d\theta,
		\end{eqnarray*}
		and
		\begin{equation*}
			tr(R^{-1}(z)\tau)\enspace=\enspace-\frac{1}{\pi}\int_0^{2\pi}\frac{z_0d\theta}{4z_0^2-z_1^2-z_2^2-2z_1z_2\cos\theta}d\theta.\qquad\qquad\qquad\qquad\enspace\quad
		\end{equation*}
		
		Case $(\romannumeral2)$:  $z_0+z_3=0$. In this condition, the result will be the same as case $(\romannumeral1)$.
		
		Case $(\romannumeral3)$: $z_0\ne z_3$ and $z_0\ne -z_3$. Let $G_{T}^-(z)=(z_0-z_3)^2-(z_1T+z_2)(z_1T^*+z_2)$ \\and $G_{T}^+(z)=(z_0+z_3)^2-(z_1T+z_2)(z_1T^*+z_2)$.
		
		Thus
		 \begin{eqnarray*}
			K(z)^{-1}&=&-z_3\begin{pmatrix}z_0+z_3   &z_1T+z_2\\
				z_1T^*+z_2 &z_0+z_3
			\end{pmatrix}^{-1}\begin{pmatrix}
				z_0-z_3   &z_1T+z_2\\
				z_1T^*+z_2 &z_0-z_3
			\end{pmatrix}^{-1}\\
			&=&-z_3\begin{pmatrix}
				R_{11}  &R_{12}\\
				R_{21}  &R_{22}
			\end{pmatrix},
		\end{eqnarray*}
		where
		\begin{eqnarray*}    
			&R_{11}&=(z_0^2-z_3^2)^{-1}\bigg(I+(z_1T+z_2)G_{T}^+(z)^{-1}(z_1T^*+z_2)\bigg)\bigg(I+(z_1T
			+z_2)G_{T}^-(z)^{-1}(z_1T^*\\
			&&+z_2)\bigg)+
			(z_1T+z_2)G_{T}^+(z)^{-1}G_T^-(z)^{-1}(z_1T^*+z_2),\\
			&R_{12}&=-(z_0+z_3)^{-1}\bigg(I+(z_1T+z_2)G_T^+(z)^{-1}(z_1T^*+z_2)\bigg)(z_1T+z_2)G_T^-(z)^{-1}-(z_0-z_3)\\&&(z_1T+z_2)G_T^+(z)^{-1}(z_1T+z_2)G_T^-(z)^{-1},\\
			&R_{21}&=-(z_0-z_3)^{-1}G_T^+(z)^{-1}(z_1T^*+z_2)\bigg(I+(z_1T+z_2)G_T^-(z)^{-1}(z_1T^*+z_2)\bigg)-(z_0+z_3)\\&&G_T^+(z)^{-1}G_T^-(z)^{-1}(z_1T^*+z_2),
		\end{eqnarray*}
	and
	\begin{equation*}
R_{22}\enspace=G_T^+(z)^{-1}(z_1T^*+z_2)(z_1T+z_2)G_T^-(z)^{-1}+(z_0^2-z_3^2)G_T^+(z)^{-1}G_T^-(z)^{-1}.\qquad\qquad\;
	\end{equation*}
		
		Hence, by \eqref{trace-formula}and \eqref{operator-resolution}, we have
		\begin{eqnarray*}
			tr(R^{-1}(z))&=&\frac{1}{2}tr(-K(z)^{-1}R_{\lambda_{D_{\infty}}}(\widetilde{z})z_3^{-1})\\
			&=&\frac{1}{2}tr(z_0R_{11}+R_{12}(z_1T^*+z_2)+R_{21}(z_1T+z_2)+z_0R_{22})\\
			&=&\frac{1}{2}\int_{\mathbb{T}}\frac{2z_0(z_0^2-z_3^2)-2z_0(z_1\lambda+ z_2)(z_1\overline{\lambda}+z_2)}{G_1(z)G_2(z)}tr(dE(\lambda))\\
			&=&\frac{1}{2\pi}\int_0^{2\pi}\frac{z_0(z_0^2-z_3^2-z_1^2-z_2^2-2z_1z_2cos\theta)}{G_{\theta}^{-}(z)G_{\theta}^{+}(z)}d\theta.
		\end{eqnarray*}
		
		Similarly, we have that
				\begin{eqnarray*}
			tr(R^{-1}(z)a)&=&-\frac{1}{2\pi}\int_0^{2\pi}\frac{(z_1+z_2cos\theta)(z_0^2+z_3^2-z_1^2-z_2^2-2z_1z_2cos\theta)}{G_{\theta}^{-}(z)G_{\theta}^{+}(z)}d\theta,\\
			tr(R^{-1}(z)t)&=&-\frac{1}{2\pi}\int_0^{2\pi}\frac{(z_1cos\theta+z_2)(z_0^2+z_3^2-z_1^2-z_2^2-2z_1z_2cos\theta)}{G_{\theta}^{-}(z)G_{\theta}^{+}(z)}d\theta,
		\end{eqnarray*}
		and
		
		\begin{equation*}
			tr(R^{-1}(z)\tau)\enspace=\frac{1}{2\pi}\int_0^{2\pi}\frac{z_3(z_0^2-z_3^2-z_1^2-z_2^2-2z_1z_2cos\theta)}{G_{\theta}^{-}(z)G_{\theta}^{+}(z)}d\theta.\qquad\qquad\qquad\;
		\end{equation*}
		This follows that
		\begin{align*}
			tr(\omega_{R}(z))&=\frac{1}{4}\bigg(Tr(R^{-1}(z))dz_0+Tr(R^{-1}(z)a)dz_1+Tr(R^{-1}(z)t)dz_2+Tr(R^{-1}(z)\tau)dz_3\bigg) \\
			&=\displaystyle d\bigg(\frac{1}{8\pi}\int_0^{2\pi}log(G_{\theta}^{-}(z)G_{\theta}^{+}(z))d\theta\bigg),
		\end{align*}
		where $\displaystyle d$ stands for 
		\begin{align*}
			\frac{\partial}{\partial z_0}dz_0+\frac{\partial}{\partial z_1}dz_1+\frac{\partial}{\partial z_2}dz_2+\frac{\partial}{\partial z_3}dz_3,
		\end{align*}
		and $G_{\theta}^{-}(z), G_{\theta}^{+}(z)$ are defined in (\ref{suoxie}).	\end{proof}
		At the beginning of this section, we mentioned that a central linear functional $\phi$ will induce a non-trivial closed 1-form in De Rham cohomology group of the projective resolvent set if $\phi(I)\ne 0$. In \cite[Corollary 4.4]{grigorchuk2017joint}, it is proved by De Rham duality theorem that the  $H^1_{de}(P^c(R_{\lambda_{D_\infty}}),\mathbb{Z})$ is generated by $\frac{1}{2\pi}tr(\omega_{R_{\lambda_{D_\infty}}})$.
	So it is a natural question that what is the case for $D_{\infty h}$. In fact, there are other central linear functional will affect the cohomology group.

  It is well known that
	\begin{eqnarray*}
		C^*(G\times H)&=&C^*(G)\otimes_{max} C^*(H),\\
		C^*_r(G\times H)&=& C^*_r(G)\otimes_{min} C^*_r(H).
	\end{eqnarray*} for any discrete group $G,H$.
	Moreover, if one of the group, for example $G$, is amenable, then the group $C^*$ algebra $C^*(G)$ is nuclear and $C^*(G)$ is isomorphic to $C^*_r(G)$.  Back to our question, since both $D_{\infty}$ and $\mathbb{Z}_2$ are amenable, $C^*(D_{\infty}\times \mathbb{Z}_2)$ is the closure of  algebraic tensor product of $C^*(D_{\infty})$ and $C^*(\mathbb{Z}_2)$ in $B(l^2(D_{\infty h}))$. So tensor product of two linear functionals, on the two $C^*$ algebras respectively, will define a linear functional on $C^*(D_{\infty h})$ after extension to the closure.

	Apparently, dim$C^*(\mathbb{Z}_2)=2$. This follows that every linear functional on $C^*(\mathbb{Z}_2)$ is the complex linear combination of $\phi_1$ and $\phi_2$, where
	\begin{equation*}
		\phi_1(\lambda_{\mathbb{Z}_2}(e))=1,\qquad \phi_1(\lambda_{\mathbb{Z}_2}(\tau))=0;
	\end{equation*}
	and
	\begin{equation*}
		\phi_2(\lambda_{\mathbb{Z}_2}(e))=0,\qquad \phi_2(\lambda_{\mathbb{Z}_2}(\tau))=1.
	\end{equation*}
	 In fact, $\phi_1\otimes tr$ is just the trace on $C^*(D_{\infty h})$. Now define $\widetilde{\phi}\triangleq\phi_2-\phi_1$, it acts like twist one dimension to the opposite direction. One can check that $\widetilde{\phi}$ is not a trace, since it is -1 at the identity. The next proposition will based on the following observation.
	
	Similar to the formula of $\widetilde{tr}$ in (\ref{trace-formula}), it can be easily verified that 
	\begin{equation}\label{new-trace-formula}
		(\widetilde{\phi}\otimes tr)((a_{ij})_{i,j=1}^4)=
		-\frac{1}{4}tr(a_{11}+a_{22}+a_{33}+a_{44})+
		\frac{1}{4}tr(a_{13}+a_{24}+a_{31}+a_{42})
	\end{equation}
	
	\begin{thm}
		$\widetilde{\phi}\otimes tr$ induces a different element besides $Tr(\omega_R(z))$ in the cohomology group $H^1_{de}(P^c(R_{\lambda_{D_{\infty h}}}),\mathbb{Z})$, where $Tr$ is the cannonical trace on $C^*(D_{\infty h})$.
	\end{thm}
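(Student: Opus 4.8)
The plan is to compute the $1$-form $(\widetilde{\phi}\otimes tr)(\omega_R)$ in closed form and then to verify that its (normalized) De Rham class is integral and is not a scalar multiple of that of $Tr(\omega_R)$. First, the hypotheses of \cite[Theorem~3.2]{yang2009projective} hold: $C^*(\mathbb{Z}_2)$ is commutative so $\widetilde{\phi}$ is central on it, $tr$ is a trace on $C^*(D_\infty)$, hence $\widetilde{\phi}\otimes tr$ extends to a central linear functional on $C^*(D_{\infty h})=C^*(\mathbb{Z}_2)\otimes C^*(D_\infty)$ with $(\widetilde{\phi}\otimes tr)(I)=\widetilde{\phi}(\lambda_{\mathbb{Z}_2}(e))\,tr(I)=-1\neq0$. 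Therefore $(\widetilde{\phi}\otimes tr)(\omega_R)$ is closed and represents a nonzero class in $H^1_{de}(P^c(R_{\lambda_{D_{\infty h}}}),\mathbb{C})$; the substance of the theorem is that this class is integral and differs from $[Tr(\omega_R)]$.

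For the computation I would use the decomposition $C^*(\mathbb{Z}_2)\cong\mathbb{C}_{+}\oplus\mathbb{C}_{-}$ determined by $\lambda_{\mathbb{Z}_2}(\tau)\mapsto(1,-1)$. Under the induced splitting $C^*(D_{\infty h})\cong C^*(D_\infty)\oplus C^*(D_\infty)$,
\[
R_{\lambda_{D_{\infty h}}}(z)\longmapsto R_{\lambda_{D_\infty}}(z_0+z_3,z_1,z_2)\oplus R_{\lambda_{D_\infty}}(z_0-z_3,z_1,z_2),
\]
consistently with \eqref{matrix-representation}, in which $\lambda_{D_{\infty h}}(\tau)$ interchanges the two $2\times2$ diagonal blocks; moreover $Tr=\tfrac12(tr\oplus tr)$ and $\widetilde{\phi}\otimes tr=-(0\oplus tr)$, that is, minus the canonical trace on the summand where $\tau$ acts as $-1$. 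Since the Maurer--Cartan form is compatible with finite direct sums of unital algebras and with the affine reparametrizations $z\mapsto(z_0\mp z_3,z_1,z_2)$, applying \cite[Proposition~3.2]{grigorchuk2017joint} on each summand gives
\[
(\widetilde{\phi}\otimes tr)(\omega_R(z))=-\frac{1}{4\pi}\,d\!\int_0^{2\pi}\log G_\theta^{-}(z)\,d\theta,\qquad Tr(\omega_R(z))=\tfrac12(\alpha^{+}+\alpha^{-}),
\]
where $\alpha^{\pm}:=\tfrac{1}{4\pi}\,d\!\int_0^{2\pi}\log G_\theta^{\pm}(z)\,d\theta$ and $G_\theta^{\pm}$ are as in \eqref{suoxie}. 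The same formula can be obtained directly from \eqref{new-trace-formula} by plugging in the block expression for $R^{-1}(z)$ from the proof of Proposition~\ref{traceM} (now including also its $(1,3),(2,4),(3,1),(4,2)$ blocks), using \eqref{operator-resolution}, and splitting into the cases $z_3=0$, $z_0=\pm z_3$, $z_0\neq\pm z_3$ as there.

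It then suffices to show that $\alpha^{+}$ and $\alpha^{-}$ are linearly independent in $H^1_{de}(P^c(R_{\lambda_{D_{\infty h}}}),\mathbb{C})$ and that each $\tfrac{1}{2\pi}\alpha^{\pm}$ is integral. Granting this, $[Tr(\omega_R)]=\tfrac12[\alpha^{+}]+\tfrac12[\alpha^{-}]$ and $[(\widetilde{\phi}\otimes tr)(\omega_R)]=-[\alpha^{-}]$ cannot be proportional (else $[\alpha^{+}]$ would be a scalar multiple of $[\alpha^{-}]$), so they are distinct elements, and both lie in the integral subgroup. For the independence, restrict to the plane $\{z_1=1,\ z_2=0\}$: by Theorem~\ref{joint-spec} the resolvent set there is $\{(z_0,z_3):(z_0\pm z_3)^2\neq1\}$, and $\alpha^{\pm}$ restricts to $\tfrac12\,d\log((z_0\pm z_3)^2-1)$; in the coordinates $u=z_0+z_3$, $v=z_0-z_3$ these become $\tfrac12\,d\log(u^2-1)$ and $\tfrac12\,d\log(v^2-1)$ on $\{u\neq\pm1\}\cap\{v\neq\pm1\}$, which have nonzero residues along disjoint divisors and hence are independent; equivalently the small loops $\gamma^{+}$ around $u=1$ and $\gamma^{-}$ around $v=1$ lie in the resolvent set and give the nonsingular period matrix $(\int_{\gamma^{i}}\alpha^{j})=\pi i\,I_2$. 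For integrality, $\tfrac{1}{2\pi}\alpha^{-}$ is the pullback of the integral generator $\tfrac{1}{2\pi}tr(\omega_{R_{\lambda_{D_\infty}}})$ of $H^1_{de}(P^c(R_{\lambda_{D_\infty}}),\mathbb{Z})$ from \cite[Corollary~4.4]{grigorchuk2017joint} along the holomorphic map $z\mapsto(z_0-z_3,z_1,z_2)$, and pullbacks of integral classes are integral; the same holds for $\tfrac{1}{2\pi}\alpha^{+}$, and hence for $\tfrac{1}{2\pi}Tr(\omega_R)$.

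The main difficulty is the explicit evaluation of $(\widetilde{\phi}\otimes tr)(\omega_R)$. Carried out from \eqref{new-trace-formula} it is a lengthy bookkeeping requiring the off-diagonal blocks of $R^{-1}(z)$ not computed in Proposition~\ref{traceM}; carried out via the $\mathbb{C}_{+}\oplus\mathbb{C}_{-}$ splitting it is short, but one must justify carefully that the Maurer--Cartan form is natural under the direct-sum decomposition of $C^*(D_{\infty h})$ and under the linear substitutions $z\mapsto(z_0\pm z_3,z_1,z_2)$, so that \cite[Proposition~3.2]{grigorchuk2017joint} applies verbatim on each factor. After that, the linear independence of $\alpha^{\pm}$ and their integrality constitute a short topological argument modeled on \cite[Corollary~4.4]{grigorchuk2017joint}.
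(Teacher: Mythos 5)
Your proposal is correct, and it takes a genuinely different route from the paper's. The paper stays with the $4\times 4$ block picture and \eqref{new-trace-formula}: it writes $(\widetilde{\phi}\otimes tr)(R^{-1}(z))$ and $(\widetilde{\phi}\otimes tr)(R^{-1}(z)a)$ as explicit integrals, evaluates $\partial_{z_1}$ of the first and $\partial_{z_0}$ of the second at the single point $p=(1,8,4,2)$, finds two different numbers, and concludes non-proportionality with the exact form $Tr(\omega_R)$, whose mixed partials must be symmetric. Your diagonalization of $\tau$, giving $C^*(D_{\infty h})\cong C^*(D_\infty)\oplus C^*(D_\infty)$ with $R\mapsto R_+\oplus R_-$, $Tr=\tfrac12(tr\oplus tr)$ and $\widetilde{\phi}\otimes tr=-(0\oplus tr)$, reduces everything to two pullbacks of the Grigorchuk--Yang computation and then separates $[\alpha^+]$ from $[\alpha^-]$ by periods on the slice $z_1=1$, $z_2=0$. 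Your route buys two things. First, it is structurally cleaner and proves more: $[\alpha^+]$ and $[\alpha^-]$ span a rank-two subgroup of $H^1_{de}(P^c(R_{\lambda_{D_{\infty h}}}),\mathbb{C})$. Second, and more importantly, it is the argument that actually closes. Since $\widetilde{\phi}\otimes tr$ is a tensor product of tracial functionals it is central, so by \cite[Theorem 3.2]{yang2009projective} the form $(\widetilde{\phi}\otimes tr)(\omega_R)$ is closed and its mixed partials \emph{cannot} disagree at any point of the resolvent set; the discrepancy found at $p$ traces back to an error in the paper's formula for $(\widetilde{\phi}\otimes tr)(R^{-1}(z))$, where the numerator factor $z_0^2-z_3^2-z_1^2-z_2^2-2z_1z_2\cos\theta$ should be $G_\theta^{+}(z)$; after the cancellation this gives exactly your $\frac{1}{2\pi}\int_0^{2\pi}\frac{z_3-z_0}{G_\theta^{-}(z)}\,d\theta$, the $dz_0$-coefficient of $-\frac{1}{4\pi}\,d\int_0^{2\pi}\log G_\theta^{-}(z)\,d\theta$ (one can sanity-check this at $z_1=z_2=0$, where $(z_0+z_3\tau)^{-1}$ is computable by hand). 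With the corrected coefficients the two mixed partials agree and the paper's criterion yields no information, so a genuine cohomological separation of $[\alpha^+]$ from $[\alpha^-]$, such as your residue/period computation on the slice, is not merely an alternative but necessary. The only point to polish is the normalization in your integrality claim, where you inherit the same $2\pi$ versus $2\pi i$ ambiguity as \cite[Corollary 4.4]{grigorchuk2017joint}; the non-proportionality of the two classes, which is the substance of the theorem, is independent of that choice.
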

	
	\begin{proof}
		The proof is based on calculation of $R^{-1}(z)$ and $K^{-1}(z)$. 
		Using the formula in \eqref{new-trace-formula}, we get
		\begin{equation*}
			(\widetilde{\phi}\otimes tr)(R^{-1}(z))=\frac{1}{2\pi}\int_0^{2\pi}\frac{(z_3-z_0)(z_0^2-z_3^2-z_1^2-z_2^2-2z_1z_2cos\theta)}{G_{\theta}^{-}(z)G_{\theta}^{+}(z)}d\theta,
		\end{equation*}
	and
	\begin{equation*}
		(\widetilde{\phi}\otimes tr)(R^{-1}(z)a)=\frac{1}{2\pi}\int_0^{2\pi}\frac{z_1+z_2cos\theta}{G_{\theta}^{-}(z)}d\theta.\quad\quad\quad\quad\quad\quad\quad\quad\quad\quad\quad\quad
	\end{equation*}

 Choosing a point $p=(1,8,4,2)$ in the projective joint resolvent set, 
		\begin{align*}
			\displaystyle\frac{\partial\bigg(\widetilde{\phi}\otimes tr(R^{-1}(z))\bigg)}{\partial z_1}\bigg|_{z=p} &= \frac{1}{2\pi}\int_{0}^{2\pi}\frac{(16+8cos\theta)(4096cos^2\theta+10624cos\theta+6841)}{(79+64cos\theta)^2(71+64cos\theta)^2}d\theta\\
			&=\frac{14872}{45045\sqrt{105}}-\frac{7896}{45045\sqrt{2145}},
		\end{align*}
		while

		\begin{align*}
		\frac{\partial\bigg(\widetilde{\phi}\otimes tr(R^{-1}(z)a)\bigg)}{\partial z_0}\bigg|_{z=p} &=\frac{1}{2\pi}\int_{0}^{2\pi}\frac{16+8cos\theta}{(79+64cos\theta)^2}d\theta\qquad\qquad\qquad\qquad\qquad\qquad
	\\
		&=\frac{752}{2145\sqrt{2145}}.\qquad\qquad\qquad\qquad\qquad\qquad
		\end{align*}
		However, $\displaystyle\frac{\partial \bigg(Tr(R^{-1}(z)a)\bigg)}{\partial z_0}=\frac{\partial \bigg(Tr(R^{-1}(z))\bigg)}{\partial z_1}=\displaystyle\frac{\partial \bigg(Tr(\omega_{R}(z))\bigg)}{\partial z_0\partial z_1}$ by Proposition \ref{traceM}. Thus $\widetilde{\phi}\otimes tr(\omega_R(z))$ can't be multiples of $Tr(\omega_R(z))$, and definitely a different element in the cohomology group.
	\end{proof}

	As a corollary, $H^1_{de}(P^c(R_{\lambda_{D_\infty}}),\mathbb{Z})$ is not singly generated.
	

	\section{Self Similarity of \texorpdfstring{$D_{\infty h}$}{}}
	Let $\mathcal{T}=\{x_1, x_2, \dots, x_d\}$ be a finite set with $d\geq 2$.  we denote $T^{(d)}$  the set of all finite words over $\mathcal{T}$ including the empty word $\varnothing$. It is natrually the vertex set of a d-ary tree with the empty word $\varnothing$ being its root. $T^{(d)}_n$ consists of words of length n, called the nth level of the tree. It follows that
	\begin{equation*}
		T^{(d)}=\bigcup\limits_{n=0}^{\infty} T^{(d)}_n.
	\end{equation*}
	We denote by $\displaystyle\partial T^{(d)}$ the boundary of a tree. Equipped with Tychonoff topology, it consists of all infinite length words $x_1x_2\dots$,  where $x_i\in \mathcal{T}$. Two words in $T^{(d)}$ are connected by an edge if and only if they are of the form $v$ and $xv$, where $v\in T^{(d)}, x\in \mathcal{T}$.   A self-similar action of a group $G$ is a faithful action of $G$ on $T^{(d)}$  satisfying that for every $ g\in G$ and every $ x\in \mathcal{T}$,  there exists $h\in G$ and $y\in \mathcal{T}$ such that \begin{equation*}
		g(xw)=yh(w)
	\end{equation*} and $g(\varnothing)=\varnothing$ for any $ g\in G$. By induction on the level of the tree, we can see that each $T^{(d)}_n$ is $G$-invariant and the action of $G$ on $T^{(d)}_n$ is faithful. This means that $G$ is a subgroup of the permutation group $S_{d^n}$ when restriction to each $T^{(d)}_n$ level.
	
	Let $f: T^{(d)}\rightarrow T^{(d)} $ be an endomorphism of the tree $T^{(d)}$, i.e. a map that preserves the root and adjacency of the vertices. Consider a vertex $v\in T^{(d)}$ and the subtrees $vT^{(d)}$ and $f(v)T^{(d)}$. Here $vT^{(d)}$ is the  subtree rooted at $v$ with vertices set consisting of words starting with $v$. Combining these two maps, we get a map $f: vT^{(d)}\rightarrow f(v)T^{(d)}$. 
	
	By the map $vT^{(d)}\rightarrow T^{(d)}: vw\mapsto w$, the subtree $vT^{(d)}$ is isomorphic to $T^{(d)}$. The same holds for $f(v)T^{(d)}$. After identifying $vT^{(d)}$ and $f(v)T^{(d)}$ with $T^{(d)}$, we can define an endomorphism $f|_v: T^{(d)}\rightarrow T^{(d)}$, which is uniquely determined by 
	\begin{equation*}
		f(vw)=f(v)f|_v(w).
	\end{equation*} We call $f|_v$ the restriction of $f$ in $v$.  Using this definition, a group action is self similar if and only if for any $g\in G$, $g|_v\in G$ for any $v \in T^{(d)}$.
	
	By restriction, for each fixed $n\in \mathbb{N}$ and $g\in G$, we have a wreath product decomposition $g=(g|_{v_1},g|_{v_2},\dots,g|_{v_{d^n}})\sigma$, where $v_1, v_2, \dots, v_{d^n}$ are vertices of $T^{(d)}_n$ in order, $\sigma \in S_{d^n}$, and $g_i\in Aut(T_i)$, with $T_i$ being the subtree rooted at $v_i$.
	
	A group is called self similar if it has a self similar action on some  $T^{(d)}$. It is known that the infinite dihedral group $D_{\infty}$ is a self similar group by the recursive relation
	\[ a=\sigma, \qquad t=(a,t),\]
	and $\mathbb{Z}_2$ by
	\[\tau=\sigma,\quad \text{or} \quad\tau=\sigma(\tau,\tau).\]
	In \cite{bartholdi2020self}, it is shown that the direct product of two self similar groups is self similar. In particular, if $G$ and $H$ has a self similar action on $d_1$-ary tree and $d_2$-ary tree separately, then $G\times H$ will have a self similar action on $d_1d_2$-ary tree. It follows that $D_{\infty h}$ has a self similar action on the 4-ary tree $T^{(4)}$.
	
	Next, we will define the self similar representation. A $d$-similarity of an infinite dimensional  Hilbert space $H$ is an isomorphism 
	\[\psi:H\rightarrow H^d=\underbrace{H\oplus H\oplus\dots H}_d.\]
	We can define a map $T_k: H\rightarrow H$ by
	\begin{equation}\label{coordinate-map}
		T_k(\xi)=\psi^{-1}(0,0,\dots,\xi,\dots,0)
	\end{equation}
	where  $\xi$ is the $k$-th coordinate on the right side. Note that if $\psi(\xi)=(\xi_1,\dots,\xi_d)$, then $\xi=\sum\limits_{k=1}^d T_k(\xi_k)$.  Suppose group $G$ has a self similar action on $T^{(d)}$. A unitary representation $\rho$ of $G$ on $H$ is called self similar (with respect to the d-similarity $\psi$) if
	\[\rho(g)T_x=T_y\rho(h)\]
	whenever $g(xw)=yh(w)$ for any $ w\in T^{(d)}$. By \eqref{coordinate-map}, every operator $\rho(g)\in B(H^d)$ with $g\in G$ can be written as a $d\times d$ matrix
	\[\rho(g)=(A_{yx})_{x,y\in X}\]
	where
	\[A_{yx}=\begin{cases}
		\rho(g|_x) & \text{if}\quad g(x)=y,\\
		0 & \text{otherwise}.
	\end{cases}\]
	It is called the matrix recursion on $C_{\rho}^*(G)$. 
	
	Now, let $\mu$ be the uniform Bernoulli measure on $\displaystyle\partial T^{(d)}$ and $H=L^2(\displaystyle\partial T^{(d)},\mu)$. Note that the measure $\mu$ is invariant under the induced action of self similar group $G$. The Koopman representation is defined by
	\[\rho(g)f(x)=f(g^{-1}x)\]
	for any $ g\in G$ and $f\in H$. In \cite[Example 8]{grigorchuk2006self} , it is proved that Koopman representation is  a self similar representation. In this section, we will show that the corresponding Koopman representation of $D_{\infty h}$ is weakly equivalent to the left regular representation.

	
	
	\begin{thm} \label{iso-space}
		Suppose $\rho_1, \rho_2$ be the Koopman representation of groups $G$ and $H$, then they yield a Koopman representation $\rho$ of $G\times H$ and
		$C_{\rho}^*(G\times H)\cong C_{\rho_1}^*(G)\otimes_{min}C_{\rho_2}^*(H).$
		
	\end{thm}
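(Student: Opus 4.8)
The plan is to build the Koopman representation of $G \times H$ concretely from the boundaries of the two trees and then identify the resulting $C^*$-algebra with the minimal tensor product. First I would recall that if $G$ acts self-similarly on $T^{(d_1)}$ and $H$ on $T^{(d_2)}$, then by the construction recalled just before the statement (following \cite{bartholdi2020self}) the product $G \times H$ acts self-similarly on $T^{(d_1 d_2)}$, and moreover $\partial T^{(d_1 d_2)}$ is canonically homeomorphic to $\partial T^{(d_1)} \times \partial T^{(d_2)}$ in a way that intertwines the $G\times H$ action with the product of the two actions — the first coordinate being acted on by $G$ through $\rho_1$ and the second by $H$ through $\rho_2$, with the other factor acting trivially. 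Under this identification the uniform Bernoulli measure $\mu$ on $\partial T^{(d_1 d_2)}$ is the product $\mu_1 \otimes \mu_2$ of the uniform Bernoulli measures, so
\[
H \;=\; L^2(\partial T^{(d_1 d_2)}, \mu) \;\cong\; L^2(\partial T^{(d_1)},\mu_1) \otimes L^2(\partial T^{(d_2)},\mu_2) \;=\; H_1 \otimes H_2,
\]
and the Koopman representation $\rho$ of $G\times H$ decomposes as $\rho(g,h) = \rho_1(g)\otimes \rho_2(h)$. This is the key structural identity; everything else is a statement about $C^*$-algebras generated by such elementary tensors.

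Next I would argue the $C^*$-algebra identification. Since $\rho(g,e) = \rho_1(g)\otimes I$ and $\rho(e,h) = I \otimes \rho_2(h)$, the algebra $C^*_\rho(G\times H)$ contains $C^*_{\rho_1}(G) \otimes 1$ and $1 \otimes C^*_{\rho_2}(H)$ as commuting subalgebras, hence there is a surjective $*$-homomorphism from the maximal tensor product $C^*_{\rho_1}(G) \otimes_{\max} C^*_{\rho_2}(H)$ onto $C^*_\rho(G\times H)$, and since $\rho_i(g) = \rho(g)$ generates $C^*_{\rho_i}$, the image is all of $C^*_\rho(G\times H)$. To get the \emph{minimal} tensor product I would invoke amenability: for the groups at hand, and more generally when (say) $G$ is amenable, $C^*_{\rho_1}(G)$ is a quotient of the nuclear algebra $C^*(G)$, hence nuclear, so $\otimes_{\max}$ and $\otimes_{\min}$ agree for it — this is exactly the mechanism used earlier in Section 3 for $C^*(D_\infty \times \mathbb{Z}_2)$. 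Then I would show the canonical map $C^*_{\rho_1}(G) \otimes_{\min} C^*_{\rho_2}(H) \to C^*_\rho(G\times H) \subseteq B(H_1\otimes H_2)$ is injective: this is automatic because $C^*_{\rho_1}(G)$ acts (faithfully, by definition) on $H_1$ and $C^*_{\rho_2}(H)$ on $H_2$, and the minimal tensor norm is precisely the norm inherited from $B(H_1)\,\bar\otimes\,B(H_2) \subseteq B(H_1\otimes H_2)$; so the spatial tensor product sits isometrically inside $B(H_1\otimes H_2)$ with image exactly the $C^*$-algebra generated by the elementary tensors $\rho_1(g)\otimes\rho_2(h)$, which is $C^*_\rho(G\times H)$.

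The combination of the surjection from $\otimes_{\max}$ and the isometric inclusion from $\otimes_{\min}$, together with nuclearity forcing $\|\cdot\|_{\max} = \|\cdot\|_{\min}$ on the algebraic tensor product, yields the desired isomorphism $C^*_\rho(G\times H) \cong C^*_{\rho_1}(G)\otimes_{\min} C^*_{\rho_2}(H)$. I expect the main obstacle to be the bookkeeping in the first paragraph: pinning down the homeomorphism $\partial T^{(d_1 d_2)} \cong \partial T^{(d_1)}\times\partial T^{(d_2)}$ so that it genuinely intertwines the $G\times H$-action coming from the product self-similar structure of \cite{bartholdi2020self} with the product of the Koopman actions — one has to unwind how the alphabet $\{1,\dots,d_1 d_2\}$ is identified with $\{1,\dots,d_1\}\times\{1,\dots,d_2\}$ and check the wreath-recursion of $(g,h)$ splits accordingly at every level. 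Once that measure-theoretic and dynamical identification is in place, the operator-algebra half is routine given the nuclearity input already used in the paper. If one wants to avoid amenability hypotheses entirely for the general statement, the fallback is to note that $C^*_\rho(G\times H)$ is a concrete algebra of operators on $H_1\otimes H_2$ generated by $\rho_1(g)\otimes\rho_2(h)$, which by definition of the spatial tensor product is $C^*_{\rho_1}(G)\otimes_{\min}C^*_{\rho_2}(H)$ — so in fact the $\min$ statement holds with no amenability assumption at all, and amenability is only needed if one additionally wants to compare with $\otimes_{\max}$.
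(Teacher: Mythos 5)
Your proposal is correct, and its essential content coincides with the paper's proof: identify $\partial T^{(d_1d_2)}$ with $\partial T^{(d_1)}\times\partial T^{(d_2)}$ via the pairwise interleaving of letters, check that the uniform Bernoulli measure becomes the product measure (the paper does this on cylinder functions depending on finitely many letters), conclude that $\rho(g,h)$ is unitarily equivalent to $\rho_1(g)\otimes\rho_2(h)$ on $L^2(\partial T^{(d_1)},\mu_1)\otimes L^2(\partial T^{(d_2)},\mu_2)$, and then observe that the $C^*$-algebra generated by these elementary tensors is, by definition of the spatial tensor product of concretely (hence faithfully) represented $C^*$-algebras, exactly $C^*_{\rho_1}(G)\otimes_{\min}C^*_{\rho_2}(H)$. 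The one divergence is that your primary route passes through a surjection from $C^*_{\rho_1}(G)\otimes_{\max}C^*_{\rho_2}(H)$ and then invokes nuclearity to collapse $\max$ onto $\min$; that detour is superfluous for the statement as given and would smuggle in an amenability or nuclearity hypothesis that the theorem does not assume. Your closing ``fallback'' --- that the spatial description already gives the $\min$ isomorphism with no amenability input --- is precisely the paper's argument, so you should promote it to the main line of the proof and drop the $\otimes_{\max}$ discussion, which is only relevant if one also wants to identify $C^*_\rho(G\times H)$ with the maximal tensor product. You are also right that the only real bookkeeping burden is verifying that the alphabet identification $\{1,\dots,d_1d_2\}\leftrightarrow\{1,\dots,d_1\}\times\{1,\dots,d_2\}$ intertwines the product self-similar action with the product of the Koopman actions; the paper discharges this by checking agreement on an orthonormal basis of cylinder functions.
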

	
	\begin{proof}
		Suppose $G$ has a self similar action  on $d_1$-ary tree with alphabet $\{x_1,x_2,\cdots, x_{d_1}\}$,  $H$ on $d_2$-ary tree with alphabet $\{y_1, y_2,\cdots y_{d_2}\}$. By \cite{bartholdi2020self}, $G\times H$ will have a self-similar action on a $d_1d_2-ary$ tree with alphabet $\{z_1,z_2,\cdots,z_{d_1d_2}\}$, and a Koopman representation $\widetilde{\rho}$. In order to define $\widetilde{\rho}$, we firstly define a map 
		\[\Phi:\displaystyle\partial T^{(d_1)}\times \displaystyle\partial T^{(d_2)}\rightarrow \displaystyle\partial T^{(d_1d_2)}\]
		by the following law: for every element $(x_1^{(d_1)}x_2^{(d_1)}\dots,y_1^{(d_2)}y_2^{(d_2)}\dots) \in \displaystyle\partial T^{(d_1)}\times \displaystyle\partial T^{(d_2)}$, we can rewrite it as $\bigg((x_1^{(d_1)},x_1^{(d_2)}),(y_2^{(d_1)},y_2^{(d_2)}),\dots\bigg)$, then $\Phi$ is defined by
		\[\Phi\bigg((x_1^{(d_1)}x_2^{(d_1)}\dots,y_1^{(d_2)}y_2^{(d_2)}\dots)\bigg)\newline=\bigg(\varphi(x_1^{(d_1)},y_1^{(d_2)}),\varphi(x_2^{(d_1)},y_2^{(d_2)}),\dots\bigg),\]where for each $n\in \mathbb{N}$
		\begin{equation*}
		\varphi(x_n^{(d_1)},y_n^{(d_2)})=z_{i+(j-1)d_1}\quad  \text{if}\quad x_n^{(d_1)}=x_i,\, y_n^{(d_2)}=y_j.
		\end{equation*}
		
		In this way, we get a one-one correspondence between $\displaystyle\partial T^{(d_1)}\times \displaystyle\partial T^{(d_2)}$ and $\displaystyle\partial T^{(d_1d_2)}$. We denote $\Phi^{-1}$ by $(\Phi_1,\Phi_2)$.
		
		Then the Koopman representation $\widetilde{\rho}$ is defined by  
		\[\widetilde{\rho}(g,h)f(\omega)=f(\Phi(g^{-1}\Phi_1(\omega),h^{-1}\Phi_2(\omega)))\] for every $(g,h)\in G\times H$, $\omega\in \displaystyle\partial T^{(d_1d_2)}$, and $f\in L^2(\displaystyle\partial T^{(d_1d_2)},\widetilde{\mu})$, where $\widetilde{\mu}$ is the Bernoulli measure on $\displaystyle\partial T^{(d_1d_2)}$.
		
		After that, we claim 
		\[L^2(\displaystyle\partial T^{(d_1)}, \mu_1)\otimes L^2(\displaystyle\partial T^{(d_2)}, \mu_2)\cong L^2(\displaystyle\partial T^{(d_1)}\times \displaystyle\partial T^{(d_2)}, \mu_1\otimes \mu_2)\cong L^2(\displaystyle\partial T^{(d_1d_2)},\widetilde{\mu}),\]
		where $\mu_1,\mu_2$ are the Bernoulli measures on $\displaystyle\partial T^{(d_1)}$ and $\displaystyle\partial T^{(d_2)}$. 
		
		The first isomorphism is a direct result from tensor product of $L^2$ spaces. For the second part, by definition of boundary of a tree, functions that depend on finite length words are dense in $L^2(\displaystyle\partial T,\mu)$, i.e. functions that have the following property:
		\[f(x_1x_2\cdots x_n\cdots)=f(x_1x_2\cdots x_n),\]
		in particular, the characteristic functions in $L^2(\displaystyle\partial T,\mu)$ that depend on finite length words. Thus $\mu_1\otimes \mu_2=\widetilde{\mu}$ is a direct result after computing the expectation of such characteristic functions. As a corollary, $\widetilde{\rho}$ is just the Koopman representation yielded by $\rho_1$ and $\rho_2$ and we will denote it by $\rho$ from now on.
		
		By definition of minimal tensor product of $C^*$-algebras and faithfulness of Koopman representation,
		\begin{eqnarray*}
			C_{\rho_1}^*(G)\otimes_{min}C_{\rho_2}^*(H)&\cong& \overline{span}\{\rho_1(g)\otimes\rho_2(h): g\in G, h\in H\}\\
			&\subseteq& B\bigg(L^2(\displaystyle\partial T^{(d_1)}, \mu_1)\otimes L^2(\displaystyle\partial T^{(d_2)}, \mu_2)\bigg).\\
		\end{eqnarray*}
		From the claim, we have
		\begin{eqnarray*}
			C_{\rho_1}^*(G)\otimes_{min}C_{\rho_2}^*(H)&\cong& \overline{span}\{U\rho_1(g)\otimes\rho_2(h)U^*: g\in G, h\in H\}\\
			&\subseteq& B\bigg(L^2(\displaystyle\partial T^{(d_1d_2)},\widetilde{\mu})\bigg).\\
		\end{eqnarray*}
		Since $C_{\rho}^*(G\times H)=\overline{span}\{\rho(g,h): (g,h)\in G\times H\}\subseteq B\bigg(L^2(\displaystyle\partial T^{(d_1d_2)},\widetilde{\mu})\bigg)$, it suffices to verify $U\rho_1(g)\otimes\rho_2(h)U^*$ and $\rho(g,h)$ has the same affect on the orthogonal basis of $L^2(\displaystyle\partial T^{(d_1)}\times \displaystyle\partial T^{(d_2)}, \mu_1\times \mu_2)$, which is trivial.
	\end{proof}
	
	Based on this proposition, we can get the following theorem directly.
	

	\begin{thm}
		The Koopman representation $\rho$ and left regular representation $\lambda$ of $D_{\infty h}$ are weakly equivalent.

	\end{thm}
	
	\begin{proof}
		Suppose $\rho_1,\rho_2$ denotes the Koopman representation of $D_{\infty}$ and $\mathbb{Z}_2$. Obviously, dimension of $C^*_{\rho_{2}}(\mathbb{Z}_2)$ is at least two. By the extension of the identity map on $\mathbb{C}[\mathbb{Z}_2]$, we get a surjective $*$-homomorphism from $C^*_{\rho_{2}}(\mathbb{Z}_2)$ to $C^*(\mathbb{Z}_2)$. So as a linear space, the dimension of $C^*_{\rho_{2}}(\mathbb{Z}_2)$ is no more than that of $C^*(\mathbb{Z}_2)\cong C(\mathbb{Z}_2)$. Eventually, the $*$-homomorphism is just an isomorphism.
		
		In \cite{grigorchuk2017joint}, the left regular representation of $D_{\infty}$ is weakly equivalent to its Koopman representation $\rho_1$. Since all the mentioned $C^*$ algebras are nuclear, by Kirchberg Theorem\cite{murphy2014c},
		we get a canonical isomorphism between $C^*_r(D_{\infty})\otimes_{min} C^*(\mathbb{Z}_2)$ and $C^*_{\rho_1}(D_{\infty})\otimes_{min} C_{\rho_2}^*(\mathbb{Z}_2)$, so is the case between $C_r^*(D_{\infty h})$ and $C_{\rho}^*(D_{\infty h})$ and the theorem  follows.
	\end{proof}

	\bibliographystyle{plain}
	\bibliography{cite}

\end{document}